\newtheorem{definition}{Definition}[section]
\newtheorem{theorem}[definition]{Theorem}
\newtheorem{lemma}[definition]{Lemma}
\newtheorem{proposition}[definition]{Proposition}
\newtheorem{remark}[definition]{Remark}
\newtheorem{example}[definition]{Example}
\newtheorem{corollary}[definition]{Corollary}
\newtheorem{problem}{Problem}
\newcommand{\sfrac}[2]{{\sp{{#1}\!}/_{\!{#2}}}}
\newenvironment{proof}{\noindent\textbf{Proof}. \rm}{\hfill $\square$ \medskip}
\newenvironment{dedication}{\hspace*{\fill}\em }{}
\begin{document}

\title{Some Model Theory of Hypergeometric and Pfaffian Functions}

\author{Ricardo Bianconi}

\maketitle

\begin{dedication}
Dedicated to Francisco Miraglia on his 70\textsuperscript{th} birthday.
\end{dedication}

\begin{abstract}
	We present some results and open problems related to expansions of the field of real numbers by hypergeometric and related functions focussing on definability and model completeness questions. In particular, we prove the strong model completeness for expansions of the field of real numbers by the exponential, arctangent and hypergeometric functions. We pay special attention to the expansion of the real field by the real and imaginary parts of the hypergeometric function ${_2F_1}(\sfrac{1}{2},\sfrac{1}{2};1;z)$ because of its close relation to modular functions.
\end{abstract}

\textbf{Keywords:} {o-minimal, hypergeometric, Pfaffian functions, decidability, definability, model complete, Wilkie's Conjecture}

\tableofcontents

\section*{Introduction}

This work is an extended version of the talk given in the workshop ``{\em Logic and Applications: in honour to Francisco Miraglia by the occasion of his 70th birthday}'', September, 16-17, 2016, at the University of S\~ao Paulo, SP, Brazil.

We deal with a research project related to the model theory of the field of real numbers enriched with real analytic functions, resulting in an o-minimal structure.

O-minimality is a branch of Model Theory which has been very useful recently in proofs of Andr\'e-Oort conjecture (an important problem in Algebraic Geometry) by Jonathan Pila and others, see \cite{pila2011,klinger-ullmo-yafaev2014,gao2015}. One of the main ingredients is a diophantine counting result due to J. Pila and Alex Wilkie, \cite{pila-wilkie2006}, where it is stated the {\em Wilkie's Conjecture}, a sharper bound on such counting which is not always true, but it holds in some particular cases, see work by Binyamini and Novikov in \cite{binyamini-novikov2016}. This is discussed in Section \ref{wilkie-conjecture}.

Other direction of this project, not unrelated with the above, deals with decidability problems for some of expansions of the field of real numbers by some analytic functions. The first result in this direction is Macintyre's and Wilkie's proof of the decidability of the first order theory of the real exponential field in 1996, \cite{macintyre-wilkie1996}, and extended by Macintyre in the 2000's to elliptic integrals, \cite{macintyre2003,macintyre2008}. These works rely on the assumption of transcendental number theoretic conjectures, which seem to be out of reach of the present methods.

One common thread linking the two paths of research broached in the previous paragraphs is the theory of {\em Pfaffian functions}. The results of Macintyre and Wilkie, \cite{macintyre-wilkie1996,macintyre2003,macintyre2008}, rely on a decidable version of Wilkie's ground breaking proof of the model completeness af the expansions of the real field by (restricted) Pfaffian functions and also by the (unrestricted) exponential function, \cite{wilkie1996}. The work of Binyamini and Novikov, \cite{binyamini-novikov2016}, contains a discussion of the possibility of proving Wilkie's conjecture for expansions of the real field by Pfaffian functions, with the intention of finding computable bounds to the counting arguments of that conjecture.

So one of the main focus of this work is the theme of Pfaffian functions discussed in Section \ref{pfaffian-functions}, where we survey some of its theory.  In this section we present the larger class of Noetherian functions and state the first open problem, relating them to the Pfaffian functions.
An important property of the theories studied here is o-minimality, so in order to guarantee that it holds in the structures we prove a model completeness test in Section \ref{model-completeness-test}. Section \ref{complex-differential-equations} contains some partial results in the direction of the first open problem and we state particular cases of this open problem related to first order linear differential equations and comments on the difficulties when we treat second order equations. In this section we see that there appears a non linear first order equation (Riccati's equation) that becomes a system of two equations we still have no way to transform into the Pfaffian setting. In Section \ref{hypergeometric-functions} we treat hypergeometric functions and its relation to modular functions and prove a model completeness result with the methods of Section \ref{model-completeness-test}. This would give a decidable version of the author's work on the model completeness of expansions of the real field by such functions, \cite{bianconi2015}. We present a short discussion of Wilkie's conjecture in Section \ref{wilkie-conjecture} and end this work with some final remarks, Section \ref{final-remarks}.

\medskip

Scattered in the text there are seven \textsc{Open Problems}: (\ref{open-problem-1}): p. \pageref{open-problem-1}; (\ref{model-complete-pfaffian}): p. \pageref{model-complete-pfaffian}; (\ref{Riccati-Pfaffian}): p. \pageref{Riccati-Pfaffian}; (\ref{pfaffian-hypergeometric-model-complete}): p. \pageref{pfaffian-hypergeometric-model-complete}; (\ref{exp-definable-2F1-real}): p. \pageref{exp-definable-2F1-real}; (\ref{wilkie-conjecture-pfaffian}): p. \pageref{wilkie-conjecture-pfaffian}; (\ref{wilkie-conjecture-modular}): p. \pageref{wilkie-conjecture-modular}.

\medskip

\textsc{Notation:} $\mathbb{R}$ denotes the field of real numbers; $\mathbb{C}$, the field of complex numbers;  $\Re(z)$, the real part of the complex number, and $\Im(z)$, its imaginary part; $\mathbb{H}$, the upper half plane $\{z\in\mathbb{C}:\Im(z)>0\}$; $\bar{z}$ the complex conjugate of $z$; $D_{\rho}(z_0)$ denotes the open disk $\{z\in\mathbb{C}:|z-z_0|<\rho\}$; $f\circ g$ indicates the composition of functions $f$ and $g$, $f\circ g(x)=f(g(x))$.

\section{A Model Completeness Test}\label{model-completeness-test}

In previous model completeness results, \cite{bianconi2015,bianconi2016}, the author has made use of the following tests. These tests imply a strong form of model completeness.

\begin{definition}
We say that s set $X\subseteq\mathbb{R}^n$ is \emph{strongly definable} in the structure $\mathcal{R}$ if it is definable by an existential formula $\exists \bar{y}\,\varphi(\bar{x},\bar{y})$ such that for all $\bar{a}\in X$, there is a unique $\bar{b}$ such that $\mathcal{R}\models\varphi(\bar{a},\bar{b})$. The (first order theory of the) structure $\mathcal{R}$ is \emph{strongly model complete} if every definable set is strongly definable.
\end{definition}

\begin{theorem}[{\cite[Theorem 2]{bianconi2015}}]\label{model-completeness-W-system} 
Let $\hat{R}=\langle\mathbb{R},+,-$, $\cdot,<$, $(F_{\lambda})_{\lambda\in\Lambda}$, con\-stants$\rangle$ be an expansion of the field of real numbers, where for each $\lambda\in\Lambda$, $F_{\lambda}$ is the restriction to a compact poly-interval $D_{\lambda}\subseteq\mathbb{R}\sp{n_{\lambda}}$ of a real analytic function whose domain contains $D_{\lambda}$, and defined as zero outside $D_{\lambda}$, such that there exists a complex analytic function $g_{\lambda}$ defined in a neighbourhood of a poly-disk $\Delta_{\lambda}\supseteq D_{\lambda}$ and such that
\begin{enumerate}
\item
$g_{\lambda}$ is strongly definable in $\hat{R}$ and the restriction of $g_{\lambda}$ to $D_{\lambda}$ coincides with $F_{\lambda}$ restricted to the same set;
\item
for each $\boldsymbol{a}\in\Delta_{\lambda}$ there exists a compact poly-disk $\Delta$ centred at $\boldsymbol{a}$ and contained in the domain of $g_{\lambda}$, such that all the partial derivatives of the restriction of $g_{\lambda}$ to $\Delta$ are strongly definable in $\hat{R}$.
\end{enumerate}
Under these hypotheses, the theory of $\hat{R}$ is strongly model complete.
\end{theorem}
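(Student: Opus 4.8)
The plan is to prove, by induction on the complexity of formulas, that every set definable in $\hat{R}$ is strongly definable. Finite unions, intersections and projections preserve ordinary existential definability automatically, so the model-complete part of the statement reduces to showing that the complement of an existentially definable set is existentially definable. The adjective ``strong'' imposes a second, pervasive demand: the witnessing tuple must be pinned down uniquely, which is delicate under union (two competing witnesses) and under projection (the quantified coordinates may fail to be determined). I would meet this second demand structurally, by arranging that all the sets produced during the induction are assembled from graphs of analytic functions whose fibres are single points determined by analytic data, so that the genuinely hard inductive step remains complementation. Throughout I fix the class $\mathcal{S}$ of functions strongly definable in $\hat{R}$ and record its elementary closure properties: it contains the coordinate functions and the field operations, each $g_\lambda$ (hence each $F_\lambda$ on $D_\lambda$) lies in $\mathcal{S}$ by hypothesis (1), and $\mathcal{S}$ is closed under partial differentiation of the $g_\lambda$ by hypothesis (2) and under composition by the unique-witness formulation. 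The aim is to recognise in $\langle\mathbb{R},+,\cdot,<,(g_\lambda)\rangle$, with the complex variables split into real and imaginary parts, a Weierstrass system whose structural data are all strongly definable.

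Second, I would exploit complex analyticity to obtain local normal forms for the zero loci of the generating functions. Near any $\boldsymbol{a}\in\Delta_\lambda$, after a linear change of coordinates, the Weierstrass preparation theorem factors $g_\lambda = U\cdot P$, with $U$ a non-vanishing unit and $P$ a Weierstrass polynomial $w^{d}+a_{d-1}(z)w^{d-1}+\dots+a_0(z)$ whose coefficients are holomorphic in the remaining variables $z$. The reason for passing to the holomorphic $g_\lambda$ rather than working with $F_\lambda$ directly is precisely that this factorisation, and the attendant Weierstrass division, exists only in the analytic category. The preparation and division data are constructed from $g_\lambda$ and its partial derivatives, which are available by hypothesis (2); the standard argument keeps $U$, $P$ and the quotient–remainder pair within $\mathcal{S}$. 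This supplies, locally and uniformly on a compact poly-disk $\Delta$ centred at $\boldsymbol{a}$, a finite description of $\{g_\lambda=0\}$ together with a definable bound on the number of sheets over the $z$-variables.

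Third, and this is the heart of the argument, I would turn these normal forms into a complementation step that preserves strong definability. A strongly definable set is the projection of a relatively closed piece cut out by (in)equalities among members of $\mathcal{S}$; to define its complement existentially I describe the fibres of the projection through their boundary behaviour, using the distinguished variable $w$ of the Weierstrass polynomial to isolate the finitely many critical values of $z$ at which the fibre structure changes. Finiteness of these critical data comes from the analytic normal form, and this is what prevents a spurious universal quantifier from surviving. Uniqueness of the witnessing tuple, namely the values of the relevant $g_\lambda$ and of their partial derivatives together with the separating points, comes from the identity theorem: an analytic continuation is determined by its germ, and the passage to poly-disks $\Delta$ in hypothesis (2) is exactly what confines us to a region on which the germ determines the function single-valuedly. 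Gluing the local pieces along a definable partition of $\Delta_\lambda$ then yields an existential, unique-witness definition of the complement, and the induction closes.

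The main obstacle is precisely this third step: executing the complementation while simultaneously keeping the description finite, so that no genuine universal quantifier remains, and arranging that every auxiliary quantity introduced as a witness is determined uniquely. Complex analyticity underwrites both requirements, preparation and division for the finiteness and the identity theorem for the uniqueness, but the combinatorial bookkeeping that welds the local Weierstrass data into a single global existential formula, uniformly over the poly-disk and compatibly with the unique-witness constraint, is where the technical weight lies. Once this is carried out, strong model completeness of $\hat{R}$ follows.
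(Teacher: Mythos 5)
You should first be aware that this paper contains no proof of Theorem \ref{model-completeness-W-system} to compare against: the theorem is imported verbatim from \cite{bianconi2015} (Theorem 2 there), and the label ``W-system'' reflects the route taken in that cited proof, namely van den Dries's Weierstrass systems: one shows that the functions strongly definable in $\hat{R}$, together with the $g_\lambda$ and their partial derivatives, generate a system of rings of germs closed under the relevant operations (permutation of variables, composition, and crucially Weierstrass division), and then invokes the strong model completeness theorem for structures arising from such systems. Your sketch is in the same tradition --- Weierstrass preparation/division as the engine, complementation of existential sets as the only hard inductive step --- so the orientation is sound. But orientation is all it is: the decisive steps are asserted, not proved.

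Three assertions in particular carry the entire weight of the theorem and are left as declarations. First, your step three, which you yourself call the heart of the argument, is exactly the statement to be proved: producing an existential, unique-witness description of the complement of an existentially defined set \emph{is} strong model completeness, and saying that the finitely many critical values of a Weierstrass polynomial ``prevent a spurious universal quantifier from surviving'' names the difficulty without resolving it. Second, your claim that the unit $U$, the Weierstrass polynomial $P$ and the quotient--remainder pairs ``remain within $\mathcal{S}$'' is not a standard fact available off the shelf; the hypotheses only give strong definability of $g_\lambda$ and its partial derivatives, and proving that division data inherit strong definability (the closure condition that makes the system a Weierstrass system) is one of the key lemmas of the cited argument, not a remark. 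Third, strong definability is genuinely not closed under unions or projections: on the overlap $X_1\cap X_2$ the two witnesses compete, and the natural repair (use the $X_1$-witness when $\bar a\in X_1$, otherwise the $X_2$-witness) already requires an existential definition of the complement of $X_1$, so your structural remedy is circular precisely where the induction needs it. Relatedly, the identity theorem gives uniqueness of analytic continuations, which is uniqueness of \emph{functions}; it does not by itself pin down a unique witness \emph{tuple} in a first-order formula. As it stands the proposal is a correct strategy statement whose gaps coincide with the content of the theorem, so it cannot be accepted as a proof.
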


Now we introduce the unrestricted exponential function.

\begin{theorem} [{\cite[Theorem 4]{bianconi2015}}] \label{model-completeness-W-system-exp} 
Let $\hat{R}$ be the structure described in Theorem \ref{model-completeness-W-system}. We assume that the functions
$$
\exp\lceil_{[0,1]}(x)=
\left\{
\begin{array}{lcl}
\exp x & \mbox{if} & 0\leq x\leq 1,\\
0 & & \mbox{otherwise};
\end{array}
\right.
$$
$$
\sin\lceil_{[0,\pi]}(x)=
\left\{
\begin{array}{lcl}
\sin x & \mbox{if} & 0\leq x\leq \pi,\\
0 & & \mbox{otherwise},
\end{array}
\right.
$$
have representing function symbols in its language.
The expansion $\hat{R}_{\mathrm{exp}}$ of $\hat{R}$ by the inclusion of the (unrestricted) exponential function ``$\exp$'' is strongly model complete.
\end{theorem}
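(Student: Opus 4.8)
The plan is to combine the restricted test already at our disposal, Theorem~\ref{model-completeness-W-system}, which controls everything built from the restricted generators of $\hat{R}$, with Wilkie's model completeness argument for the unrestricted exponential \cite{wilkie1996}, while arranging at every stage that the existential definitions produced have \emph{unique} witnesses; it is precisely this uniqueness that will upgrade ordinary to strong model completeness. By hypothesis $\hat{R}$ satisfies the conditions of Theorem~\ref{model-completeness-W-system} and contains $\exp\lceil_{[0,1]}$ and $\sin\lceil_{[0,\pi]}$, so the reduct $\hat{R}$ is already strongly model complete, and the complex extensions of its generators, together with their partial derivatives, are strongly definable on suitable poly-disks.

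First I would dispose of the bounded regime. The real exponential on any fixed interval $[-N,N]$ is strongly definable from $\exp\lceil_{[0,1]}$: on $[0,N]$ one splits into the finitely many pieces $[k,k+1]$ and uses $e^{x}=e^{k}\cdot\exp\lceil_{[0,1]}(x-k)$ with the constant $e=\exp\lceil_{[0,1]}(1)$, extending to negative arguments by reciprocals; since $N$ is fixed this is a finite assembly, and strict monotonicity makes the value a unique witness. Likewise the complex exponential on any bounded strip $\{\,|\Im z|\le N\,\}$ of bounded real part is strongly definable through $e^{z}=e^{x}(\cos y+i\sin y)$, and because $\tfrac{d}{dz}e^{z}=e^{z}$ the derivative clause required by the test is automatic on such regions.

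The core of the matter is the passage to genuinely unbounded argument, and here I would follow Wilkie \cite{wilkie1996}. The exponential is governed by the Pfaffian chain $u'=u$, so every term of $\hat{R}_{\mathrm{exp}}$ is a Pfaffian function over the restricted data, and the Khovanskii--Wilkie finiteness gives uniform bounds on the number of connected components of the sets it defines; these bounds are what drive the Weierstrass-preparation normal form underlying Theorem~\ref{model-completeness-W-system}. Concretely I would represent each $\hat{R}_{\mathrm{exp}}$-definable set as a projection of a set cut out by equations among the generators and the map $x\mapsto(e^{x},e^{-x})$, and then show that the projection can be taken to have a single fibre over each of its points, using strict monotonicity of $\exp$ together with the implicit function theorem on the nonsingular locus of the defining system, so that the resulting existential formula has unique witnesses.

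The main obstacle is exactly this last, unbounded, step: there is no elementary reduction of the unrestricted exponential to its restriction by field operations alone, which is why the substance of Wilkie's theorem must be imported rather than circumvented. The delicate point is not the existence of an existential definition but the uniqueness of its witnesses \emph{after} the auxiliary variables $e^{x},e^{-x}$ have been projected away; securing this forces one to combine the strict monotonicity of $\exp$, a careful choice of the nonsingular stratum of the Pfaffian system, and the strong definability already established for the restricted reduct. Once uniqueness of witnesses is verified stratum by stratum and each stratum is shown to be strongly definable, the strong model completeness of $\hat{R}_{\mathrm{exp}}$ follows.
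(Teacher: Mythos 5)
Your proposal has a genuine gap at exactly the step you yourself call the core of the matter. The appeal to Wilkie \cite{wilkie1996} is not available in this setting: Wilkie's theorem, and its engine, Khovanskii's finiteness theory, apply to expansions by \emph{Pfaffian} chains (restricted ones, possibly together with the unrestricted exponential), that is, to systems whose derivatives are polynomial in the chain functions themselves. The primitives $F_{\lambda}$ of $\hat{R}$ in Theorem \ref{model-completeness-W-system} are arbitrary restricted real analytic functions subject only to a complex-continuation and strong-definability hypothesis; they are not assumed Pfaffian, and there is no ``relative'' Khovanskii theory over arbitrary analytic functions. So your claim that ``every term of $\hat{R}_{\mathrm{exp}}$ is a Pfaffian function over the restricted data'' is unsupported, and with it the whole unbounded-argument step collapses: neither Theorem \ref{model-completeness-W-system} (which sees only the restricted primitives) nor Wilkie's theorem (which sees only Pfaffian data plus $\exp$) controls formulas in which the unrestricted $\exp$ interacts with the $F_{\lambda}$, and model completeness is not preserved under amalgamating two languages, so the two results cannot simply be laid side by side. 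Worse, for the intended applications of this theorem --- real and imaginary parts of hypergeometric and modular-related functions --- whether those functions are even locally Pfaffian is precisely what Problems \ref{open-problem-1} and \ref{Riccati-Pfaffian} of this paper leave open, so your route tacitly assumes a positive answer to an open problem of the very paper you are working in. Two further misstatements compound this: the mechanism behind Theorem \ref{model-completeness-W-system} is complex-analytic (preparation applied to the continuations $g_{\lambda}$), not a Khovanskii-driven normal form; and even where Wilkie's theorem does apply it yields plain, not strong, model completeness --- uniqueness of witnesses cannot be extracted by an implicit-function-theorem selection on a nonsingular stratum, since picking out one point of a finite fibre is not an existential-with-unique-witness construction.

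For the comparison you were asked to make: the paper contains no proof of this statement at all; it imports it verbatim as Theorem 4 of \cite{bianconi2015}. The standard route for such results (and the reason the hypotheses ask only for the \emph{restricted} $\exp\lceil_{[0,1]}$ and $\sin\lceil_{[0,\pi]}$ among the primitives) does not pass through Pfaffian theory: the unrestricted exponential is incorporated by the Ressayre-style, valuation-theoretic method used by van den Dries, Macintyre and Marker for $\mathbb{R}_{\mathit{an},\exp}$, carried out relative to the already strongly model complete restricted structure, with the restricted sine present so that the complex exponential on compact boxes satisfies the continuation hypotheses of Theorem \ref{model-completeness-W-system}. Your bounded-range observations (finite translates of $\exp\lceil_{[0,1]}$, and $e^{z}=e^{x}(\cos y+i\sin y)$ on compact boxes) are correct, but they constitute only the easy preliminary of any such argument; everything after them would have to be replaced.
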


We present here a simplification appropriate to the envisaged applications.

\begin{proposition}\label{real-imaginary}
Suppose that the real and imaginary parts, $F_R(x,y)$ and $F_I(x,y)$, of the complex analytic function $F(z)$, $z=x+iy$, defined in a poly-disk $\Delta_{\rho}=\{z\in\mathbb{C}\sp{n}:|z_i|<\rho,~1\leq i\leq n\}$, are strongly definable in the structure $\mathcal{R}$, which expands the field of real numbers. Then are real analytic functions and admit complex analytic continuations
 $$
\tilde{F}_R(z,w)=\frac{F(z+iw)+\overline{F(\bar{z}+i\bar{w})}}{2},~~
\tilde{F}_I(z,w)=(-i)\frac{F(z+iw)+\overline{F(\bar{z}+i\bar{w})}}{2},
$$
with $z,w\in\Delta_{\rho/2}$, which are strongly definable in $\mathcal{R}$.
\end{proposition}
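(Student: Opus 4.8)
The plan is to reduce the entire statement to explicit closure properties of strong definability, by rewriting the proposed continuations purely in terms of the functions $F_R$ and $F_I$ that are assumed strongly definable. First I would isolate the two auxiliary holomorphic functions $G(z,w)=F(z+iw)$ and $H(z,w)=\overline{F(\bar{z}+i\bar{w})}$. The function $G$ is holomorphic on $\{(z,w):z+iw\in\Delta_{\rho}\}$ because $F$ is holomorphic and $(z,w)\mapsto z+iw$ is $\mathbb{C}$-linear; and $H$ equals $F^{*}(z-iw)$, where $F^{*}(\zeta)=\overline{F(\bar{\zeta})}$ is the Schwarz reflection of $F$ (holomorphic on $\Delta_{\rho}$, which is conjugation-symmetric), so $H$ is holomorphic on $\{(z,w):z-iw\in\Delta_{\rho}\}$. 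The bound $\rho/2$ in the statement is precisely what guarantees that for $z,w\in\Delta_{\rho/2}$ both $z+iw$ and $z-iw$ lie in $\Delta_{\rho}$, since $|z_{j}\pm i w_{j}|\le|z_{j}|+|w_{j}|<\rho$.

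Next I would check that $\tilde{F}_{R}=\tfrac12(G+H)$ and $\tilde{F}_{I}=\tfrac{1}{2i}(G-H)$ (correcting an evident sign in the numerator of the displayed formula for $\tilde{F}_I$) are the complex analytic continuations of $F_R,F_I$. Both are holomorphic on $\Delta_{\rho/2}\times\Delta_{\rho/2}$ by the previous paragraph, and for real arguments $z=x$, $w=y$ one has $H(x,y)=\overline{F(x+iy)}$, so $\tilde{F}_{R}(x,y)=\Re F(x+iy)=F_{R}(x,y)$ and $\tilde{F}_{I}(x,y)=\Im F(x+iy)=F_{I}(x,y)$. Since two holomorphic functions agreeing on the totally real subset $\mathbb{R}^{n}\times\mathbb{R}^{n}$ of a connected poly-disk coincide, these are the unique continuations, and in particular $F_{R},F_{I}$ are real analytic.

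Then comes the definability step, which is the substance of the proposition. Writing $z=x_{1}+ix_{2}$ and $w=y_{1}+iy_{2}$ with $x_{1},x_{2},y_{1},y_{2}\in\mathbb{R}^{n}$, one computes $z+iw=(x_{1}-y_{2})+i(x_{2}+y_{1})$ and $z-iw=(x_{1}+y_{2})+i(x_{2}-y_{1})$, whence
\[
G = F_{R}(x_{1}-y_{2},\,x_{2}+y_{1}) + i\,F_{I}(x_{1}-y_{2},\,x_{2}+y_{1}),
\]
\[
H = F_{R}(x_{1}+y_{2},\,y_{1}-x_{2}) - i\,F_{I}(x_{1}+y_{2},\,y_{1}-x_{2}).
\]
Substituting these into $\tilde{F}_{R}$ and $\tilde{F}_{I}$ expresses the real and imaginary parts of each continuation as explicit $\mathbb{R}$-affine combinations of $F_{R}$ and $F_{I}$ evaluated at arguments that are affine (indeed $\pm1$-coefficient) functions of the $4n$ real coordinates. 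Because the constants $1$, $-1$ and $\tfrac12$ are available in any expansion of the real field, strong definability of $F_{R}$ and $F_{I}$ is inherited by these combinations, provided strong definability is preserved under precomposition with affine maps defined over the structure and under the field operations $+,-,\cdot$.

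The main obstacle is exactly establishing these closure properties while retaining the \emph{uniqueness} of the existential witnesses demanded by strong definability. I would verify this by concatenating defining formulas: if $f,g$ are strongly definable by $\exists\bar{u}\,\varphi_{f}$ and $\exists\bar{v}\,\varphi_{g}$ with unique witnesses, then the graph of $f+g$ is defined by $\exists\bar{u}\,\exists\bar{v}\,\big(\varphi_{f}(\bar{x},s_{1},\bar{u})\wedge\varphi_{g}(\bar{x},s_{2},\bar{v})\wedge s=s_{1}+s_{2}\big)$, and the block $(s_{1},s_{2},\bar{u},\bar{v})$ remains uniquely determined; the same bookkeeping handles products and affine substitutions of the arguments. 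Care is needed only to keep the witness blocks disjoint and to record intermediate function values as extra, uniquely determined, witness variables, after which the uniqueness clause transfers verbatim. This reduces the proposition to the elementary but slightly technical fact that the strongly definable functions form a ring stable under affine reparametrisation, which completes the argument.
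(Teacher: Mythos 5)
Your proposal is correct and takes essentially the same route as the paper: the same reflection identity $\Re(F)(x+iy)=\tfrac{1}{2}\bigl(F(x+iy)+\overline{F(\overline{x+iy})}\bigr)$ together with $\Im(F)=\Re(-iF)$, holomorphy of $(z,w)\mapsto\overline{F(\bar{z}+i\bar{w})}$ by Schwarz reflection (which the paper obtains by conjugating the power-series coefficients), and the same triangle-inequality check that $z,w\in\Delta_{\rho/2}$ forces $z+iw,\ \bar{z}+i\bar{w}\in\Delta_{\rho}$. You also add two things worth keeping: the sign correction in $\tilde{F}_I$ is right (the paper's own reduction via $\Im(F)=\Re(-iF)$ yields $(-i)\bigl(F(z+iw)-\overline{F(\bar{z}+i\bar{w})}\bigr)/2$, so the $+$ in the displayed statement is a typo), and your explicit coordinate computation with the witness bookkeeping for sums, products and affine substitutions makes precise the strong-definability step that the paper leaves implicit.
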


\begin{proof}
This is a consequence of the equalities $\Im(F)=\Re(-iF)$ and
$$
\Re(F)(x+iy)=\frac{F(x+iy)+\overline{F(\overline{x+iy})}}{2}.
$$

If $F(x+iy)=\sum_{\alpha\in\mathbb{N}\sp{n}}c\sb{\alpha}(x+iy)\sp{\alpha}$, $\overline{F(\overline{x+iy})}=\sum_{\alpha\in\mathbb{N}\sp{n}}\bar{c}\sb{\alpha}(\bar{x}+i\bar{y})\sp{\alpha}$. If $z,w\in\Delta\sb{\rho/2}$, then $z+iw,\bar{z}+i\bar{w}\in\Delta\sb{\rho}$.
\end{proof}


The following result is an immediate consequence of this proposition applied to Theorem \ref{model-completeness-W-system}.

\begin{theorem}\label{theorem-model-completeness-test}
Let $\hat{R}=\langle\mathbb{R},\mathit{constants},+,-,\cdot,<,(F_{\lambda})_{\lambda\in\Lambda}\rangle$ be an expansion of the field of real numbers, where for each $\lambda\in\Lambda$, $F_{\lambda}$ is the restriction to a compact poly-interval $D_{\lambda}\subseteq\mathbb{R}\sp{n_{\lambda}}$ of a real analytic function whose domain contains $D_{\lambda}$, and defined as zero outside $D_{\lambda}$, such that for each $\lambda\in \Lambda$ the function $F_{\lambda}$ and all its partial derivatives of all orders admit strongly definable real analytic extension to poly-intervals twice as big as $D_{\lambda}$, and zero outside, then the theory of $\hat{R}$ is strongly model complete.\hspace*{\fill}$\square$
\end{theorem}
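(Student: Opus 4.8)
The plan is to derive the statement directly from Theorem~\ref{model-completeness-W-system} by manufacturing, for each $\lambda\in\Lambda$, a complex analytic function $g_\lambda$ on a poly-disk $\Delta_\lambda\supseteq D_\lambda$ that witnesses conditions (1) and (2) of that theorem; Proposition~\ref{real-imaginary} will serve as the bridge that converts the \emph{real} definability data supplied by the hypothesis into the \emph{complex} definability data demanded by Theorem~\ref{model-completeness-W-system}. Thus the whole argument reduces to verifying those two conditions for each $\lambda$, after which the conclusion is immediate.

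First I would fix $\lambda$ and let $\hat F_\lambda$ denote the strongly definable real analytic extension of $F_\lambda$ to the poly-interval $2D_\lambda$ (twice as big as $D_\lambda$), writing $\hat F_\lambda^{(\alpha)}$ for the analogous extension of each partial derivative $\partial^\alpha F_\lambda$. Since $\hat F_\lambda$ is real analytic on a neighbourhood of the compact set $D_\lambda$, it admits a complex analytic continuation to a poly-disk; using the compactness of $D_\lambda$ to obtain a uniform radius, this continuation is defined on some $\Delta_\rho$ whose real trace lies in $2D_\lambda$. The doubling is exactly what guarantees enough room: passing to the half-radius poly-disk $\Delta_{\rho/2}$ of Proposition~\ref{real-imaginary} still leaves $D_\lambda\subseteq\Delta_{\rho/2}$. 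Invoking Proposition~\ref{real-imaginary} on the real and imaginary parts of this continuation then upgrades the strong definability of $\hat F_\lambda$ to the strong definability of the complex analytic extension $g_\lambda$ on $\Delta_{\rho/2}=:\Delta_\lambda$, and by construction $g_\lambda$ agrees with $F_\lambda$ on $D_\lambda$; this is condition (1).

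For condition (2) I would run the same complexification on each partial derivative: the hypothesis gives a strongly definable real analytic extension $\hat F_\lambda^{(\alpha)}$ of every $\partial^\alpha F_\lambda$, and Proposition~\ref{real-imaginary} upgrades each to a strongly definable complex analytic function on $\Delta_\lambda$, which is precisely the corresponding partial derivative of $g_\lambda$. Since every point of $\Delta_\lambda$ sits in such a poly-disk, all partial derivatives of $g_\lambda$ are locally strongly definable, giving condition (2). With both conditions checked for every $\lambda$, Theorem~\ref{model-completeness-W-system} yields the strong model completeness of $\hat R$. The hard part is the real-to-complex transfer in the middle step: it is not formal, since the complexification of a real analytic function is not an existential operation in general (its Taylor expansion in the imaginary directions is an infinite sum), and it is exactly this transfer that Proposition~\ref{real-imaginary} is designed to perform, with the doubling of the poly-interval absorbing the $\rho\mapsto\rho/2$ loss of radius. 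The remaining points — uniformity of the radius of continuation over the compact $D_\lambda$, the gluing of the local poly-disks into a single $\Delta_\lambda$, and the bookkeeping of the ``zero outside'' convention — are routine.
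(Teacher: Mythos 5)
Your skeleton — reduce to Theorem~\ref{model-completeness-W-system} by building a strongly definable complex extension $g_\lambda$, with Proposition~\ref{real-imaginary} as the bridge and the doubling of $D_\lambda$ absorbing the $\rho\mapsto\rho/2$ loss — is exactly the route the paper announces in its one-line proof. But your central step misreads what Proposition~\ref{real-imaginary} actually does, and the gap you yourself flag is left unfilled. The proposition's \emph{input} is the strong definability of \emph{both} the real and imaginary parts $F_R,F_I$ of an already given complex analytic $F$ on a complex poly-disk, i.e.\ definability data on the full $2n$-real-dimensional complex domain; its \emph{output} is strong definability of the complexifications $\tilde F_R,\tilde F_I$ in $2n$ complex variables. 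The formula $\tilde F_R(z,w)=\bigl[F(z+iw)+\overline{F(\bar z+i\bar w)}\bigr]/2$ defines anything only because $F$ can be evaluated, existentially in terms of $F_R$ and $F_I$, at points \emph{off} the real axis. In your situation the only data supplied by the hypothesis is $\hat F_\lambda$, the restriction of the (putative) holomorphic continuation to its \emph{real} trace, together with its derivatives; nothing in Proposition~\ref{real-imaginary}, or anywhere else in the paper, converts real-trace definability into definability of the continuation at non-real points. That conversion is precisely the ``not formal'' step you acknowledge (the Taylor expansion in the imaginary directions is an infinite sum), and attributing it to Proposition~\ref{real-imaginary} does not perform it. So neither condition (1) nor condition (2) of Theorem~\ref{model-completeness-W-system} gets verified.

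There is a second, independent failure: your claim that compactness gives a continuation defined on a poly-disk $\Delta_\rho$ with $D_\lambda\subseteq\Delta_{\rho/2}$. Real analyticity on a neighbourhood of $2D_\lambda$ yields a holomorphic continuation only on a thin complex slab of some uniform thickness $\epsilon$ around the real points, and $\epsilon$ bears no relation to the diameter of $D_\lambda$. Concretely, let $F_\lambda$ be the restriction of $1/(1+x^2)$ to $D_\lambda=[-1,1]$: the function and all its derivatives are rational, hence strongly definable on $[-2,2]$, so all hypotheses of Theorem~\ref{theorem-model-completeness-test} hold; yet every disk containing $[-1,1]$ contains $i$ or $-i$, the poles of the unique continuation $1/(1+z^2)$, so no holomorphic $g_\lambda$ on a neighbourhood of a poly-disk $\supseteq D_\lambda$ agreeing with $F_\lambda$ exists at all, definable or not. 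This blocks your construction as written, and it also shows that the paper's stated hypotheses cannot literally feed Theorem~\ref{model-completeness-W-system}: what makes the argument work in the paper's actual applications (Section~\ref{hypergeometric-functions}) is that there the primitives arrive as real and imaginary parts of explicitly given complex analytic functions whose continuations to twice-as-large disks are known and have strongly definable real and imaginary parts — exactly the input format Proposition~\ref{real-imaginary} demands. A correct proof must assume (or otherwise supply) that complex-analytic data; it cannot be conjured from definability of the real restriction alone.
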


\section{Pfaffian Functions}\label{pfaffian-functions}

Pfaffian functions were introduced by Askold Khovanskii in 1980 in his seminal paper {\em On a Class of Systems of Transcendental Equations}, \cite{khovanskii1980}. In this paper he proved that there exists a computable bound to the number of non singular zeros of a system of equations with Pfaffian functions, and as a consequence, a computable bound to the sum of the Betti numbers of the set of zeros of a system of such equations (see an extended exposition of these results in \cite{marker1997}). This plays an important role in the proof of the model completeness of expansions of the field of real numbers with Pfaffian functions by Alex Wilkie, \cite{wilkie1996}, and its decidable version by Angus Macintyre and Alex Wilkie, \cite{macintyre-wilkie1996}. Noetherian functions are closely related to Pfaffian functions and were introduced by Jean-Claude Tougeron in 1991, \cite{tougeron1991}. They may not be Pfaffian functions (for instance, $\sin x$ is Noetherian and non Pfaffian) but there has been some research about local finiteness results, see \cite{gabrielov-vorobjov2004}.

We start defining Pfaffian and Noetherian functions.

\begin{definition}[Pfaffian Functions] \rm
Let $U\subseteq\mathbb{R}\sp{n}$ be an open set. A finite sequence of smooth functions $f_j:U\to\mathbb{R}$, $1\leq i\leq N$, is a \emph{Pfaffian chain} if there exist real polynomials $P_{i,j}(\bar{x},y_1,\dots,y_i)$, $1\leq i\leq N$ and $1\leq j\leq n$, such that
$$
\frac{\partial f_i}{\partial x_j}=P_{i,j}(\bar{x},f_1,\dots,f_i),
$$
or, equivalently,
$$
df_i(\bar x)=\sum_{j=1}\sp{n}P_{i,j}(\bar{x},f_1,\dots,f_i)\,dx_j.
$$
A \emph{Pfaffian function} is any function which belongs to a Pfaffian chain.
\end{definition}

\begin{definition}[Noetherian Functions] \rm
Let $U\subseteq\mathbb{R}\sp{n}$ be an open set. A finite sequence of smooth functions $f_j:U\to\mathbb{R}$, $1\leq i\leq N$, is a \emph{Noetherian chain} if there exist real polynomials $P_{i,j}(\bar{x},y_1,\dots,y_N)$, $\bar{x}=(x_1,\dots,x_n)$, $1\leq i\leq N$ and $1\leq j\leq n$, such that
$$
\frac{\partial f_i}{\partial x_j}=P_{i,j}(\bar{x},f_1,\dots,f_N),
$$
or, equivalently,
$$
df_i(\bar x)=\sum_{j=1}\sp{n}P_{i,j}(\bar{x},f_1,\dots,f_N)\,dx_j.
$$
A \textbf{Noetherian function} is any function belonging to a Noetherian chain.
\end{definition}

\begin{remark} \rm
Pfaffian and Noetherian functions are real analytic functions.
\end{remark}

Some basic properties of these functions are stated in the following lemma, whose proof is straightforward.

\begin{lemma}\label{lema-basic-properties}
If $f,g:U\subseteq\mathbb{R}\sp{n}\to\mathbb{R}$ are Pfaffian (respectively Noetherian) functions then $f+g$, $f\cdot g$ and $1/f$ (if $f(\bar x)\neq 0$) are Pfaffian (respectively Noetherian) functions. If $f:U\subseteq\mathbb{R}\sp{n}\to\mathbb{R}$ and $g_1,\dots,g_n:V\subseteq\mathbb{R}\sp{m}\to\mathbb{R}$ are Pfaffian (respectively Noetherian) functions, such that for all $\bar{x}\in V$, $(g_1(\bar x), \dots,g_n(\bar x))\in U$, then $h:\bar x\in V\to h(\bar x)=f(g_1(\bar x), \dots,g_n(\bar x))\in \mathbb{R}$ is a Pfaffian (respectively Noetherian) function. If all the partial derivatives $\partial f/\partial x_i$ of the function $f$ are Pfaffian (respectively Noetherian) functions, then $f$ is a Pfaffian (respectively Noetherian) function. 
\end{lemma}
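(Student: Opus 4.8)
The plan is to reduce every assertion to one structural fact — that concatenating two Pfaffian (respectively Noetherian) chains yields another such chain — and then to realize each new function by appending a single term to a suitably combined chain.

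First I would record the concatenation step. If $f$ lies in a Pfaffian chain $f_1,\dots,f_N$ and $g$ in a Pfaffian chain $g_1,\dots,g_M$ on the same open $U$, then $f_1,\dots,f_N,g_1,\dots,g_M$ is again a Pfaffian chain: the derivative of each $f_i$ is a polynomial in $\bar x,f_1,\dots,f_i$ and that of each $g_i$ a polynomial in $\bar x,g_1,\dots,g_i$, so in both cases the derivative is polynomial in $\bar x$ together with chain entries of index at most that of the differentiated function, which is exactly the defining condition. For Noetherian chains the same concatenation works, since there derivatives may depend on all chain entries. Hence I may assume $f$ and $g$ lie in one common chain $h_1,\dots,h_K$. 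Closure under $+$, $\cdot$, and reciprocal then follows by appending one function. For $f+g$, append $h_{K+1}=f+g$ and observe $\partial(f+g)/\partial x_j=\partial f/\partial x_j+\partial g/\partial x_j$ is polynomial in $\bar x,h_1,\dots,h_K$; for $f\cdot g$ the product rule gives a polynomial in the chain likewise; for $1/f$ with $f$ nonvanishing, append $h_{K+1}=1/f$ and note $\partial(1/f)/\partial x_j=-(\partial f/\partial x_j)\,h_{K+1}^2$, polynomial in $\bar x,h_1,\dots,h_{K+1}$ — here the new entry legitimately appears in its own derivative, as the Pfaffian condition permits. The third assertion is even simpler: merge the chains of all $\partial f/\partial x_i$ into one chain and append $f$ itself, each $\partial f/\partial x_j$ being already a chain entry and so trivially polynomial in the chain.

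The composition statement is the one genuinely substantial step, and the main obstacle. After merging $g_1,\dots,g_n$ into a common chain $G_1,\dots,G_M$ on $V$, I would introduce the pulled-back functions $F_i(\bar x)=f_i(g_1(\bar x),\dots,g_n(\bar x))$ for $1\le i\le N$, where $f_1,\dots,f_N$ is the chain containing $f$. The chain rule together with the defining equations $\partial f_i/\partial y_j=P_{i,j}(\bar y,f_1,\dots,f_i)$ yields
$$
\frac{\partial F_i}{\partial x_k}=\sum_{j=1}^{n}P_{i,j}(g_1,\dots,g_n,F_1,\dots,F_i)\,\frac{\partial g_j}{\partial x_k},
$$
and since each $\partial g_j/\partial x_k$ is polynomial in $\bar x,G_1,\dots,G_M$ and each $g_j$ is among the $G$'s, the right-hand side is polynomial in $\bar x,G_1,\dots,G_M,F_1,\dots,F_i$. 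Thus $G_1,\dots,G_M,F_1,\dots,F_N$ is a Pfaffian chain containing $h=f\circ(g_1,\dots,g_n)$, which is one of the $F_i$.

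The delicate point is exactly the index bookkeeping in the Pfaffian case: one must order the combined chain so that each $F_i$'s derivative involves only entries of index at most $i$, which holds precisely because $P_{i,j}$ depends on $f_1,\dots,f_i$ alone. For the Noetherian variant this ordering constraint evaporates, since derivatives may reference the entire chain, so the same pull-back construction goes through verbatim. I expect the writing effort to concentrate here — on verifying that the pulled-back polynomials still have the correct dependence structure — while the remaining closure properties are immediate once concatenation is in hand.
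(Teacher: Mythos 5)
Your proof is correct. There is nothing in the paper to diverge from: the lemma is stated there with only the remark that its proof is straightforward, and your write-up is precisely the standard argument the paper is implicitly relying on. The easy closure steps (concatenate the two chains, append the new function, check that its partial derivatives are polynomial in entries of admissible index) are all sound, including the reciprocal case where the appended entry $1/f$ may appear in its own derivative. Your composition step coincides with the chain-rule pull-back computation the paper gives for the proposition immediately following the lemma, but your bookkeeping is in fact slightly sharper: by placing the chain $G_1,\dots,G_M$ of the inner functions before the pulled-back functions $F_1,\dots,F_N$ and using that $P_{i,j}$ involves only $f_1,\dots,f_i$, you show that the pull-back of a Pfaffian chain by a Pfaffian map is again a \emph{Pfaffian} chain (the triangular index condition survives), whereas the paper's proposition records only the weaker conclusion that one obtains a Noetherian chain --- a conclusion forced there only because it also covers the mixed Pfaffian/Noetherian cases. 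The one cosmetic slip is the phrase ``entries of index at most $i$'' in your final paragraph: in the combined chain $F_i$ sits at position $M+i$, so the condition is ``index at most $M+i$''; your argument clearly establishes exactly that, so this is a matter of wording, not a gap.
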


We want to reduce the logic questions (such as model completeness, decidability) about Noetherian functions to the case of Pfaffian functions, which is more understood nowadays. So we introduce some tools which may be useful in this project.

Firstly we introduce change of variables.

\begin{definition} \rm
A map $\Phi:U\subseteq\mathbb{R}\sp{n}\to V\subseteq \mathbb{R}\sp{m}$ is a \emph{Pfaffian map} (respectively \emph{Noetherian map}) if each of its coordinate functions is a Pfaffian (respectively Noetherian) function.

Let $f_1,\dots,f_N:V\subseteq\mathbb{R}^m\to\mathbb{R}$ be a Noetherian or Pfaffian chain and $\Phi:U\subseteq\mathbb{R}\sp{n}\to V\subseteq \mathbb{R}\sp{m}$ a Noetherian or Pfaffian map. The sequence $g_i(\bar x)=f_i\circ\Phi(\bar x)$, $1\leq i\leq N$, is called the \emph{pull back} of the chain $f_1$, \dots, $f_N$ (by the map $\Phi$).
\end{definition}

\begin{proposition}
The pull-back of a Pfaffian or Noetherian chain by a Pfaffian or Noetherian map can be extended to a Noetherian chain.
\end{proposition}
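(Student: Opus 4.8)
The plan is to compute the partial derivatives of the pull-back functions $g_i=f_i\circ\Phi$ by the chain rule and to recognise the resulting expressions as polynomials in the variables together with a single enlarged chain obtained by concatenating the chain of the map $\Phi$ with the pulled-back chain. Throughout, write $\Phi=(\phi_1,\dots,\phi_m)$, let $\bar x=(x_1,\dots,x_n)$ be the coordinates on $U$ and $\bar y=(y_1,\dots,y_m)$ those on $V$.

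First I would reduce to the case of a single chain for $\Phi$. Each coordinate $\phi_k$ is a Noetherian (or Pfaffian) function on $U$, so it belongs to some chain; assembling these finitely many chains into one list produces a single Noetherian chain $h_1,\dots,h_M$ on $U$ that contains all the $\phi_k$. This works because the concatenation of Noetherian (resp.\ Pfaffian) chains is again such a chain: the defining relation for the derivative of each constituent function is a polynomial in the functions of its own chain, hence \emph{a fortiori} a polynomial in the functions of the combined list, and in the Pfaffian case the triangular dependence is preserved by listing one chain after the other. In particular $\partial h_l/\partial x_j$ is a polynomial in $\bar x,h_1,\dots,h_M$ for every $l,j$.

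Next, let $\partial f_i/\partial y_k=P_{i,k}(\bar y,f_1,\dots,f_N)$ be the defining Noetherian relations on $V$. The chain rule gives
\[
\frac{\partial g_i}{\partial x_j}(\bar x)=\sum_{k=1}^m\frac{\partial f_i}{\partial y_k}(\Phi(\bar x))\cdot\frac{\partial\phi_k}{\partial x_j}(\bar x).
\]
The crucial observation is that substituting $\bar y=\Phi(\bar x)$ turns $P_{i,k}(\bar y,f_1,\dots,f_N)$ into $P_{i,k}\bigl(\phi_1(\bar x),\dots,\phi_m(\bar x),g_1(\bar x),\dots,g_N(\bar x)\bigr)$, that is, a polynomial whose arguments are the chain functions $\phi_1,\dots,\phi_m$ (which sit among the $h_l$) and the pull-backs $g_1,\dots,g_N$, and \emph{not} the variables $\bar x$ themselves. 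Since each factor $\partial\phi_k/\partial x_j$ is a polynomial in $\bar x,h_1,\dots,h_M$, it follows that $\partial g_i/\partial x_j$ is a polynomial in $\bar x,h_1,\dots,h_M,g_1,\dots,g_N$. Together with the relations for the $h_l$, this shows that the concatenated sequence $h_1,\dots,h_M,g_1,\dots,g_N$ is a Noetherian chain on $U$ extending the pull-back, which proves the proposition; the functions involved are real analytic because compositions of real analytic functions are real analytic.

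I expect no serious obstacle: the only subtlety is the bookkeeping in the concatenation step and the recognition that, upon pull-back, the $y$-arguments of $P_{i,k}$ become chain functions rather than new variables. The point worth emphasising is \emph{why} the conclusion is merely Noetherian. When the chain $f_1,\dots,f_N$ is Noetherian, each $P_{i,k}$ may involve all of $f_1,\dots,f_N$, so $\partial g_i/\partial x_j$ depends on all of $g_1,\dots,g_N$, and the triangular structure characteristic of Pfaffian chains is lost; likewise a Noetherian map forces $\partial h_l/\partial x_j$ to depend on the whole list. Thus the Noetherian framework is the natural home for the general (possibly mixed) statement, even though, by tracking the triangular dependence, one can keep the chain Pfaffian in the purely Pfaffian case by ordering it as $h_1,\dots,h_M,g_1,\dots,g_N$.
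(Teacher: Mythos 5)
Your proof is correct and follows essentially the same route as the paper's: apply the chain rule to $g_i=f_i\circ\Phi$, embed the coordinate functions of $\Phi$ in a single chain $h_1,\dots,h_M$, observe that substituting $\bar y=\Phi(\bar x)$ turns the defining polynomials into polynomials in the $h_l$'s and $g_i$'s, and conclude that the concatenation $h_1,\dots,h_M,g_1,\dots,g_N$ is the desired Noetherian chain. You are in fact somewhat more careful than the paper, which simply asserts the existence of a chain containing all the $\phi_k$ and leaves both that concatenation step and the purely Pfaffian refinement implicit.
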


\begin{proof}
This is a simple application of the chain rule in the calculation of derivatives of the composition of functions.

We name the variables $\bar{x}=(x_1,\dots,x_n)\in\mathbb{R}^n$ and $\bar{y}=(y_1,\dots,y_m)\in\mathbb{R}^m$. Write $\Phi(\bar x)=(\phi_1(\bar x),\dots,\phi_m(\bar x))$. Let $\psi_1(\bar{x})$, \dots, $\psi_M(\bar{x})$ be a Noetherian (or Pfaffian) chain containing the coordinate functions of $\Phi$.

Let $g_i(\bar x)=f_i\circ\Phi(\bar x)$, $1\leq i\leq N$, be the pull back of the Noetherian chain $f_1(\bar{y})$, \dots, $f_N(\bar{y})$. We apply the chain rule to calculate the differentials
$$
dg_i(\bar{x})=\sum_{j=1}^{n}\left(\sum_{j=1}^{m}\frac{\partial g_i}{\partial y_j}\circ(\Phi(\bar{x}))\,\frac{\partial\phi_j}{\partial x_i}\right)dx_i.
$$
The partial derivatives of the coordinate functions of $\Phi$ are polynomials on the variables $\bar{x}$ and the functions $\psi_k$, $1\leq k\leq M$. The partial derivatives of the functions $f_j(\bar{y})$ are polynomials in the variables $\bar{y}$ and the functions $f_i$. The composition with the coordinate functions of the map $\phi$ turn these into polynomias in the variable $\bar{x}$ and the functions $g_j$.

The sequence $\psi_1$, \dots, $\psi_M$, $g_1$, \dots, $g_M$ if the desired Noetherian chain.
\end{proof}

\medskip

Now we introduce modifications on the Noetherian chain in a lemma whose proof is similar to the previous one.

\begin{lemma}
Let $\Psi:W\subseteq\mathbb{R}^k\to\mathbb{R}^{k+p}$ be a Pfaffian map. Let $f_1,\dots,f_k:V\subseteq\mathbb{R}^m\to\mathbb{R}$ be a sequence of functions, such that the image of the map $F(\bar x)=(f_1(\bar x),\dots,f_k(\bar x))$ is contained in $W$. If the functions $g_1$, \dots, $g_{k+p}:V\to\mathbb{R}$ are such that $(g_1(\bar x)$, \dots, $g_{k+p}(\bar x))=\Psi\circ F(\bar x)$, and can be extended to a Pfaffian chain, then $f_1$, \dots, $f_k$ can be extended to a Noetherian chain.
\end{lemma}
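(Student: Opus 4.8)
The plan is to run the chain rule in the ``opposite direction'' to the previous proposition: there one pushes a known chain forward along a known map, whereas here one must recover the inner functions $f_1,\dots,f_k$ from the composite $g_l=\Psi_l\circ F$. First I would fix the auxiliary chains. Since $\Psi$ is a Pfaffian map, let $\theta_1(\bar y),\dots,\theta_S(\bar y)$ be a Pfaffian chain on $W$ containing all the coordinate functions $\Psi_1,\dots,\Psi_{k+p}$; in particular every partial derivative $\partial\Psi_l/\partial y_i$ is a polynomial in $\bar y,\theta_1,\dots,\theta_S$. By hypothesis the $g_l$ extend to a Pfaffian chain $h_1(\bar x),\dots,h_R(\bar x)$ on $V$, so every $\partial g_l/\partial x_j$ is a polynomial in $\bar x,h_1,\dots,h_R$. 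Finally set $\tilde\theta_s(\bar x)=\theta_s\circ F(\bar x)$; these pull-backs are exactly what forces the final chain to be merely Noetherian rather than Pfaffian.

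The heart of the argument is the linear system obtained by differentiating $g_l=\Psi_l\circ F$. For each $j$,
$$
\frac{\partial g_l}{\partial x_j}=\sum_{i=1}^{k}\frac{\partial\Psi_l}{\partial y_i}(F(\bar x))\,\frac{\partial f_i}{\partial x_j},\qquad 1\le l\le k+p.
$$
Writing $\mathbf{A}(\bar x)=[(\partial\Psi_l/\partial y_i)\circ F]_{l,i}$, a $(k+p)\times k$ matrix whose entries are polynomials in $f_1,\dots,f_k,\tilde\theta_1,\dots,\tilde\theta_S$, and $\mathbf{c}_j=[\partial g_l/\partial x_j]_l$, a vector whose entries are polynomials in $\bar x,h_1,\dots,h_R$, this reads $\mathbf{A}\,\mathbf{d}_j=\mathbf{c}_j$ with $\mathbf{d}_j=[\partial f_i/\partial x_j]_i$. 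I would then invert this over-determined but (by construction) consistent system via the normal equations: provided $\mathbf{A}$ has full column rank, $\mathbf{A}^{\mathsf T}\mathbf{A}$ is an invertible $k\times k$ matrix and $\mathbf{d}_j=(\mathbf{A}^{\mathsf T}\mathbf{A})^{-1}\mathbf{A}^{\mathsf T}\mathbf{c}_j$. By Cauchy--Binet, $\Delta:=\det(\mathbf{A}^{\mathsf T}\mathbf{A})$ equals the sum of the squares of the $k\times k$ minors of $\mathbf{A}$, so it is strictly positive exactly where $\Psi$ is an immersion along the image of $F$; Cramer's rule then exhibits each $\partial f_i/\partial x_j$ as $\Delta^{-1}$ times a polynomial in $\bar x,f_1,\dots,f_k,\tilde\theta_1,\dots,\tilde\theta_S,h_1,\dots,h_R$.

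With this in hand I would verify that the collection $f_1,\dots,f_k,\tilde\theta_1,\dots,\tilde\theta_S,h_1,\dots,h_R,\Delta^{-1}$ is a Noetherian chain, i.e.\ that the $x_j$-derivative of each member is a polynomial in $\bar x$ and the whole list. The $h_r$ close up because they already form a Pfaffian chain; the $\partial f_i/\partial x_j$ close up by the displayed solution; for $\tilde\theta_s$ the chain rule gives $\partial\tilde\theta_s/\partial x_j=\sum_i((\partial\theta_s/\partial y_i)\circ F)\,\partial f_i/\partial x_j$, whose first factors are polynomials in the $f$'s and $\tilde\theta$'s and whose second factors have just been expressed; and $\partial(\Delta^{-1})/\partial x_j=-\Delta^{-2}\,\partial\Delta/\partial x_j$ closes up because $\Delta$ is a polynomial in the $f$'s and $\tilde\theta$'s whose derivatives already lie in the list. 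Because Noetherian chains impose no triangularity on which members may occur on the right-hand sides, no ordering issue arises, and $f_1,\dots,f_k$ sit inside this chain, as required.

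The main obstacle is precisely the non-degeneracy needed to form $\Delta^{-1}$: the reciprocal clause of Lemma~\ref{lema-basic-properties} only yields a Noetherian function from $1/\Delta$ where $\Delta$ does not vanish, so the method as written requires $\mathbf{A}$ to have full column rank $k$ along the image of $F$ — that is, $\Psi$ should be an immersion there. I expect this to be the genuinely substantive hypothesis implicit in the statement; in the degenerate case, where the Jacobian of $\Psi$ drops rank, the $g_l$ leave $\partial f_i/\partial x_j$ underdetermined and this approach breaks down. Everything else is the chain-rule bookkeeping already rehearsed in the previous proposition.
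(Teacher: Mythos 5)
Your argument is correct under the rank hypothesis you add, but be aware that it is not the paper's argument --- the paper in fact gives no proof of this lemma at all, saying only that the proof is ``similar to the previous one'', i.e.\ to the forward chain-rule bookkeeping for pull-backs. That computation does not transfer: in the proposition both the inner and the outer derivatives of the composition are known, whereas here the derivatives $\partial f_i/\partial x_j$ of the inner map are the unknowns, and one must invert the linear system $\mathbf{A}\,\mathbf{d}_j=\mathbf{c}_j$. Your normal-equations/Cauchy--Binet/Cramer device is exactly the idea this inversion requires, and your verification that $f_1,\dots,f_k$, the pull-backs $\tilde\theta_s$, the chain members $h_r$ and $\Delta^{-1}$ close up into a Noetherian chain is sound; it is also the right place to exploit the fact that Noetherian chains, unlike Pfaffian ones, impose no triangularity on the right-hand sides.

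Your suspicion that the non-degeneracy condition is a substantive implicit hypothesis can be sharpened: without it the lemma is false as stated, not merely out of reach of your method. Take $k=p=1$, let $\Psi:\mathbb{R}\to\mathbb{R}^2$ be the constant map $\Psi(y)=(0,0)$ (a Pfaffian map), and let $f_1=\Gamma$ restricted to $V=(0,\infty)$. Then $g_1=g_2=0$ certainly extend to a Pfaffian chain, yet $f_1$ extends to no Noetherian chain: a univariate member of a Noetherian chain of length $N$ lies in a differential field of transcendence degree at most $N+1$ over $\mathbb{R}$, hence satisfies a nontrivial algebraic differential equation with constant coefficients, which $\Gamma$ does not by H\"older's theorem. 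So some extra assumption must be added for the statement to be true: either your immersion condition (full column rank of the Jacobian of $\Psi$ along the image of $F$), or --- as in the paper's sine/cosine example, where $\Psi(y_1,y_2)=(y_2/y_1,\,1/y_1,\,y_1,\,y_2)$ --- the requirement that $\Psi$ contain the identity among its coordinates, in which case each $f_i$ is itself a member of the Pfaffian chain and the conclusion is immediate, since every Pfaffian chain is in particular a Noetherian chain. Your proof is the correct repair of the general, non-degenerate case.
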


\begin{example}[Trigonometric Functions] \rm
The sequence of two functions $g_1(x)=\cos x$, $g_2(x)=\sin x$ is a Noetherian chain, proving that the sine and cosine functions are Noetherian Functions.

The sine function is not a Pfaffian function on $\mathbb{R}$ because it has infinitely many zeros, but its restriction to the open interval $]-\pi/2,\pi/2[$ is Pfaffian. Consider the sequence $f_1(x)=\tan x$, $f_2(x)=\sec x=1/\cos x$, $f_3(x)=\cos x$ and $f_4(x)=\sin x$, all of them restricted to the interval  $]-\pi/2,\pi/2[$. This is a Pfaffian chain because
$$
\begin{array}{rclcl}
f_1'(x) & = & \sec^2 x & = & 1+f_1^2(x) \\
f_2'(x) & = & \sec x\cdot\tan x & = & f_1(x)\cdot f_2(x) \\
f_3'(x) & = & -\sin x & = & -f_1(x)\cdot f_3(x) \\
f_4'(x) & = & \cos x & = & f_3(x),
\end{array}
$$
where $f_j'$ denotes the derivative of $f_j$, $1\leq j\leq 4$. The interval  $]-\pi/2,\pi/2[$ appears naturally as a connected component of the domain of $\tan x$.

Notice that here we used $\Phi(x)=x$ and $\Psi(y_1,y_2)=({^{y_2}\!/_{\!y_1}},{^1\!/\!_{y_1}},\,y_1,\,y_2)$ applied to $(g_1(x),g_2(x))$.
\end{example}

This example is simple because the pair of functions $\sin x$ and $\cos x$ satisfies a first order linear system of two differential equations. For equations of higher order, we are not yet able to give a positive or negative answer to the problem. This is the subject of the following section.

We are now able to state our first problem.

\begin{problem}\label{open-problem-1}
Given a Noetherian chain, can it be locally extended to a Pfaffian chain, or at least to another Noetherian chain which is the pull-back of a Pfaffian chain under a Pfaffian map? Can this be done recursively?
\end{problem}

Lou van den Dries asked the following question about unrestricted Pfaffian functions (see, \cite{speissegger2012} for a discussion and partial results).

\begin{problem}\label{model-complete-pfaffian}
Is the expansion of the field of real numbers by unrestricted Pfaffian functions model complete?
\end{problem}

See the comments after the Open Problem \ref{pfaffian-hypergeometric-model-complete}, p. \pageref{pfaffian-hypergeometric-model-complete}, for connections with the definability of the exponential function. We know today that such expansion is o-minimal by \cite{wilkie1999}.

\section{Complex Linear Differential Equations}\label{complex-differential-equations}

We consider some special cases related to linear differential equations of first and second order. We show that solutions to complex first order linear equations the real and imaginary parts of the solutions are locally Pfaffian. For second order linear equations with non constant coefficients we run into difficulties because there appears a non linear first order equation (a Riccati equation) which is not amenable to the same treatment given to the linear case. It is worth mentioning the relation between quotients of two linearly independent solutions to a second order linear differential equation and modular functions (see \cite[Chapter XI]{ford1929}).

\subsection{First Order Equations}

Given holomorphic functions $g(z)$ and $h(z)$, we consider the complex first order linear differential equation $Y'=g(z)Y+h(z)$, where the prime indicates derivative with respect to the complex variable $z=x+iy$.

\begin{lemma}
 Suppose that $f(z)$ satisfies a first order linear equation $Y'=g(z)Y+h(z)$, such that the real and imaginary parts of the functions $g$ and $h$ are Pfaffian functions. Then the real and imaginary parts of $f$ are locally Pfaffian functions.
\end{lemma}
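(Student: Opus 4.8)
The plan is to separate the complex equation $Y'=g(z)Y+h(z)$ into its real and imaginary parts and exhibit the resulting real functions as members of a Pfaffian chain. Writing $z=x+iy$, $f(z)=u(x,y)+iv(x,y)$, $g(z)=g_R+ig_I$ and $h(z)=h_R+ih_I$, I first use the Cauchy–Riemann equations for the holomorphic function $f$, namely $\partial u/\partial x=\partial v/\partial y$ and $\partial u/\partial y=-\partial v/\partial x$, together with the fact that $f'=\partial u/\partial x+i\,\partial v/\partial x$. Substituting $f'=gf+h$ and equating real and imaginary parts yields
\begin{align*}
\frac{\partial u}{\partial x} &= g_R u - g_I v + h_R, &
\frac{\partial v}{\partial x} &= g_I u + g_R v + h_I,
\end{align*}
and the $y$-derivatives are then obtained from the Cauchy–Riemann equations as $\partial u/\partial y=-(g_I u+g_R v+h_I)$ and $\partial v/\partial y=g_R u-g_I v+h_R$. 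Thus all first-order partial derivatives of $u$ and $v$ are polynomials (indeed bilinear expressions) in $u$, $v$ and the functions $g_R,g_I,h_R,h_I$.

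Next I would assemble the Pfaffian chain. By hypothesis the real and imaginary parts of $g$ and $h$ are Pfaffian functions of $(x,y)$, so there is a single Pfaffian chain $\Theta_1,\dots,\Theta_N$ on some open $U\subseteq\mathbb{R}^2$ containing $g_R,g_I,h_R,h_I$ (closing finitely many chains under concatenation is harmless, since a chain remains Pfaffian when augmented by further functions whose derivatives are polynomial in the earlier ones). I then claim that $\Theta_1,\dots,\Theta_N,u,v$ is again a Pfaffian chain: the displayed formulas express $\partial u/\partial x,\partial u/\partial y,\partial v/\partial x,\partial v/\partial y$ as polynomials in $(x,y)$, in the chain functions $\Theta_i$ (through $g_R,g_I,h_R,h_I$), and in $u,v$ themselves, which is exactly the defining condition for a Pfaffian chain. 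This gives that $u=\Re(f)$ and $v=\Im(f)$ are Pfaffian.

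The word \emph{locally} in the statement is the point that requires care, and handling it is the main obstacle. A solution $f$ to the linear equation need only be defined on a domain where $g$ and $h$ are holomorphic, and the Pfaffian data for $g_R,g_I,h_R,h_I$ is a priori available only on some open set $U$; moreover a Pfaffian chain by definition consists of functions on a common open set, so I cannot expect the chain relations to persist globally. Accordingly I would fix a point $z_0=x_0+iy_0$ in the domain, choose a connected open neighbourhood on which $f$ is holomorphic and on which all of $g_R,g_I,h_R,h_I$ are simultaneously given by Pfaffian chains, and carry out the construction above on that neighbourhood; this yields that $\Re(f)$ and $\Im(f)$ are Pfaffian on a neighbourhood of $z_0$, i.e. \emph{locally Pfaffian}, which is precisely the asserted conclusion. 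The verification that the derivative expressions are genuinely polynomial in the chain — rather than merely smooth — is routine given the bilinear form of the equations, so no further computation is needed beyond recording the four partial-derivative identities.
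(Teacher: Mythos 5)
You have correctly derived the coupled first-order system---your four identities agree with the paper's---but the decisive step fails. The sequence $\Theta_1,\dots,\Theta_N,u,v$ is \emph{not} a Pfaffian chain under the definition used in this paper (and by Khovanskii): a Pfaffian chain is \emph{triangular}, i.e.\ $\partial f_i/\partial x_j=P_{i,j}(\bar x,f_1,\dots,f_i)$ may involve only the functions of index at most $i$. Your identity
\begin{equation*}
\frac{\partial u}{\partial x}=g_R u-g_I v+h_R
\end{equation*}
makes the derivative of $u$ depend on $v$, which occurs \emph{after} $u$ in your chain, and the derivative of $v$ likewise involves $u$, so no ordering of $u,v$ satisfies the definition. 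What your identities actually show is that $\Theta_1,\dots,\Theta_N,u,v$ is a \emph{Noetherian} chain, where $P_{i,j}$ may involve all of $y_1,\dots,y_N$; the passage from Noetherian to Pfaffian is precisely the nontrivial issue this paper is about (Problem~\ref{open-problem-1}), so it cannot be dismissed as ``exactly the defining condition for a Pfaffian chain''. The gap is fatal as written: take $g\equiv i$, $h\equiv 0$, whose real and imaginary parts are constants, Pfaffian on all of $\mathbb{R}^2$; your argument would make $u=e^{-y}\cos x$ and $v=e^{-y}\sin x$ Pfaffian on all of $\mathbb{R}^2$, hence $\sin$ Pfaffian on $\mathbb{R}$ after restricting to $y=0$, contradicting the fact (recalled in the paper) that $\sin$ is not Pfaffian on $\mathbb{R}$ because it has infinitely many zeros. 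Your localization does not repair this, because non-triangularity is not a domain issue: shrinking the neighbourhood never makes the system triangular.

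The paper supplies exactly the missing idea: a Riccati-type quotient substitution that triangularizes the system, just as in its $\tan,\sec,\cos,\sin$ example. For the homogeneous equation $Y'=g(z)Y$ with nonzero solution $f_0=u_0+iv_0$, one sets $q_0=u_0/v_0$, $q_1=1/v_0$, $q_2=v_0$, $q_3=u_0$ and computes, with $a=\Re g$, $b=\Im g$,
\begin{equation*}
\frac{\partial q_0}{\partial x}=-b(1+q_0^2),\qquad
\frac{\partial q_1}{\partial x}=-(bq_0+a)q_1,\qquad
\frac{\partial q_2}{\partial x}=(bq_0+a)q_2,
\end{equation*}
and similarly for the remaining derivatives: each one involves only earlier functions of the chain, so appending $q_0,q_1,q_2,q_3$ to chains for $a$ and $b$ gives a genuine Pfaffian chain. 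The division by $v_0$ forces a restriction to a subdomain where $v_0\neq 0$; \emph{this}, not the common-domain bookkeeping you describe, is the true source of ``locally'' in the statement. The inhomogeneous case is then finished by variation of parameters: $f=cf_0$ with $c'=h/f_0$, whose real and imaginary parts are Pfaffian by the closure properties of Lemma~\ref{lema-basic-properties}; since all partial derivatives of $\Re(c)$ and $\Im(c)$ are Pfaffian, so are $\Re(c)$ and $\Im(c)$ themselves, and finally so are the real and imaginary parts of $f=cf_0$, again on a possibly smaller domain.
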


\begin{proof}
We proceed in two steps. We first solve the associated homogeneous equation $Y'=g(z)Y$. Write $g(z)=a(x,y)+ib(x,y)$ and let $f_0(z)=u_0(x,y)+iv_0(x,y)$ be a non zero solution. Then
$$
\frac{df_0}{dz}=\frac{1}{2}\left(\frac{\partial}{\partial x}-i\frac{\partial}{\partial y}\right)(u_0+iv_0)=
\frac{1}{2}\left(\frac{\partial u_0}{\partial x}+\frac{\partial{v_0}}{\partial y}\right)+\frac{i}{2}\left(\frac{\partial v_0}{\partial x}-\frac{\partial u_0}{\partial y}\right).
$$
We use the Cauchy-Riemman equations $\frac{\partial u_0}{\partial x}=\frac{\partial v_0}{\partial y}$ and $\frac{\partial u_0}{\partial y}=-\frac{\partial v_0}{\partial x}$ to obtain
$$
\frac{df_0}{dz}=\frac{\partial u_0}{\partial x}+i\frac{\partial v_0}{\partial x}=\frac{\partial v_0}{\partial y}-i\frac{\partial u_0}{\partial y}.
$$
The differential equation becomes the system
$$
\arraycolsep=1.4pt\def\arraystretch{2.2}
\begin{array}{rclrcl}
\displaystyle\frac{\partial u_0}{\partial x} & = & au_0-bv_0,  & ~~~ \displaystyle\frac{\partial u_0}{\partial y} & = & -bu_0-av_0  \\
\displaystyle\frac{\partial v_0}{\partial x} & = & bu_0+av_0,  & \displaystyle\frac{\partial v_0}{\partial y} & = & au_0-bv_0
\end{array}
$$

If we set $q_0=u_0/v_0$, $q_1=1/v_0$, $q_2=v_0$ and $q_3=u_0$ (and this may impose a restriction to the domain of definition of the functions), then
$$
\frac{\partial q_0}{\partial x}=\frac{1}{v_0}\frac{\partial u_0}{\partial x}-\frac{u_0}{v_0^2}\frac{\partial v_0}{\partial x}=-b(1+q_0^2),
$$
and going the same way as the example of the sine function, we obtain a chain $q_0$, $q_1$, $q_2$, $q_3$, satisfying
$$
\arraycolsep=1.4pt\def\arraystretch{2.2}
\begin{array}{rclrcl}
\dfrac{\partial q_0}{\partial x} & = & -b(1+q_0^2), & ~~\dfrac{\partial q_0}{\partial y} & = & -a(1+q_0^2)\\
\dfrac{\partial q_1}{\partial x} & = & -(bq_0+a)q_1, & \dfrac{\partial q_1}{\partial y} & = & -(aq_0-b)q_1 \\
\dfrac{\partial q_2}{\partial x} & = & (bq_0+a)q_2, & \dfrac{\partial q_2}{\partial y} & = & (aq_0-b)q_2 \\
\dfrac{\partial q_3}{\partial x} & = & -(2bq_0^2-aq_0+b)q_2, & \dfrac{\partial q_3}{\partial y} & = &-(bq_0-a)q_2,
\end{array}
$$
which implies that the chain is a Pfaffian chain.

\medskip

Now, suppose that $Y=f(z)=u(x,y)+iv(x,y)$ satisfies $Y'=g(z)Y+h(z)$. We use the {\em method of variation of parameters}, \cite[pp. 60--62]{birkhoff-rota1989}, to solve the equation. We use the function $f_0(z)$ and search for a solution $f(z)=c(z)f_0(z)$. Substitute $y=f(z)$ in the equation and we obtain $f_0(z)c'(z)=h(z)$, or $c'(z)=h(z)/f_0(z)$. Write $h(z)=h_1(x,y)+ih_2(x,y)$ and $h/f_0=(h_1+ih_2)/(u_0+iv_0)=[(h_1u_0+h_2v_0)+i(h_2u_0-h_1v_0)]/(u_0^2+v_0^2)$.
Since $h_1$ and $h_2$ are Pffafian functions (by hypothesis), and $u_0$ and $v_0$ are Pfaffian functions by the above, we apply Lemma \ref{lema-basic-properties} to conclude that the real and imaginary parts of $f(z)$ (with a possibly smaller domain) are Pfaffian functions.
\end{proof}

\medskip

As a particular example we consider the complex exponential function.

\begin{example} \rm
 The real and imaginary parts of the complex exponential function are (locally) Pfaffian functions. We write the function $f(z) = \exp z=\exp(x+iy)=u(x,y)+i\,v(x,y)$. Then the equation $f'(z)=f(z)$ satisfied by the exponential function can be written as the system
$$
\arraycolsep=1.4pt\def\arraystretch{2.2}
\begin{array}{rclrcl}
\dfrac{\partial u}{\partial x} & = & u; & ~~~ \dfrac{\partial u}{\partial y} & = & -v \\
\dfrac{\partial v}{\partial x} & = & v; & ~~~ \dfrac{\partial v}{\partial y} & = & u.
\end{array}
$$

The Pfaffian chain is closely related to that of the sine function: $g_1(x,y)=u/v$, $g_2(x,y)=1/v$, $g_3(x,y)=v$, $g_4(x,y)=u$.
\end{example}

\subsection{Second Order and Riccati Equations}

We consider here linear homogeneous second order linear equations
$$
Y''+a_1(z)Y'+a_0(z)Y=0,
$$
with complex meromorphic coefficients $a_1(z)$ and $a_0(z)$.

The substitution $q=Y'/Y$ transforms this equation into the \emph{Riccati's equation} (see \cite[pp. 45--46]{birkhoff-rota1989})
$$
q'+q^2+a_1(z)q+a_0(z)=0.
$$
If we write $q(z)=q(x+iy)=u(x,y)+iv(x,y)$ and $a_i(z)=A_i(x,y)+B_i(x,y)$ ($i=0,1$), then Riccati's equation becomes the system
$$
\arraycolsep=1.4pt\def\arraystretch{2.2}
\begin{array}{rclrcl}
\dfrac{\partial u}{\partial x} & = & u^2-v^2 +A_1u-B_1v+A_0, & ~~~\dfrac{\partial u}{\partial y} & = &  \dfrac{\partial v}{\partial x}\\
\dfrac{\partial v}{\partial x} & = & 2uv+B_1u+A_1v+B_0,  & \dfrac{\partial v}{\partial y} & = &  -\dfrac{\partial u}{\partial x}
\end{array}
$$

If $A_i$ and $B_i$ ($i=0,1$) are Noetherian functions, then $u$ and $v$ are Noetherian functions.

\begin{problem}\label{Riccati-Pfaffian}
Is there a map $\Psi$ which transforms the pair $(u,v)$ into a Pfaffian chain (or at least into a sequence contained in a Pfaffian chain)?
\end{problem}

If we restrict to the real functions we obtain a Pfaffian functions $q$, $Y$, satisfying $q'=q^2+a_1(x)q+a_0(x)$ and $Y'=qY$. If all these functions are real analytic in a neighbourhood of the interval $[x_0,x_1]$, $x_0<x_1$, then we obtain the following result by Wilkie's method, \cite{wilkie1996}.

\begin{theorem}
Let $f_1$, \dots, $f_N$ be a Pfaffian chain of functions defined in some open neighbourhood of the interval $[x_0,x_1]$ which contains the coefficients and a non zero solution of $q'=q^2+a_1(x)q+a_0(x)$ and $Y'=qY$. Let $g_1$, \dots, $g_N$ be defined as $g_j(x)=f_j(x)$ if $x_0\leq x\leq x_1$, and as zero elsewhere, $1\leq j\leq N$. Then the theory of the structure $\langle\mathbb{R},0,1,-,+,\cdot,g_1,\dots,g_N\rangle$ is model complete.
\end{theorem}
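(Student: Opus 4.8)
The plan is to recognise the functions $g_1,\dots,g_N$ as \emph{restricted Pfaffian functions} and to invoke Wilkie's model completeness theorem for such expansions, \cite{wilkie1996}. Essentially all the content lies in checking that the hypotheses of that theorem are met by the present data; no further differential-equation analysis is needed, since the Pfaffian chain — already containing the coefficients $a_0,a_1$ and the solutions $q,Y$ — is supplied by hypothesis, and the Riccati setup serves only as motivation.

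First I would record the one structural fact about Pfaffian chains that drives everything: iterating the defining relations $f_i'=P_i(x,f_1,\dots,f_i)$ shows that every derivative $f_j^{(k)}$ is again a polynomial in $x,f_1,\dots,f_N$, so the class of functions generated by the chain is closed under differentiation up to polynomial combinations. This is precisely the feature that Khovanskii's finiteness theorem exploits. Next, by compactness of $[x_0,x_1]$ and openness of the domain, the chain is real analytic on a closed interval $[x_0-\delta,x_1+\delta]$ contained in its domain; hence each $g_j$ is the restriction to $[x_0,x_1]$ of a function analytic on a neighbourhood of that closed interval, which is the clean setting for restricted Pfaffian functions. After an affine change of the variable $x$ — definable in the real field, under which a Pfaffian chain pulls back to a Pfaffian chain because affine maps are polynomial, and which therefore leaves model completeness invariant — one may normalise $[x_0,x_1]$ to the standard interval and match the literal hypotheses of \cite{wilkie1996}. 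The cited theorem then yields both o-minimality and model completeness of $\langle\mathbb{R},0,1,-,+,\cdot,g_1,\dots,g_N\rangle$, the latter being the assertion.

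The step I expect to be the genuine obstacle is the behaviour at the endpoints $x_0,x_1$, and it is also the reason I would route the argument through Wilkie rather than through the paper's own test, Theorem \ref{theorem-model-completeness-test}. That test demands strongly definable analytic extensions of each $g_j$ and all its derivatives to an interval \emph{twice as big} as $[x_0,x_1]$; but outside $[x_0,x_1]$ the functions have been set identically to zero, so the true analytic continuation $f_j$ is simply absent from the language there, and strong definability past the endpoints fails. Wilkie's method avoids this difficulty because it operates on the restricted functions directly, deriving uniform bounds on the number of connected components of Pfaffian-definable sets — including their traces on the boundaries $\{x_0\}$ and $\{x_1\}$ — from Khovanskii's theorem applied to the chain on the open neighbourhood; model completeness then follows by Wilkie's standard argument. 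I would remark, finally, that on \emph{sub}intervals compactly contained in $(x_0,x_1)$, where a twice-as-big interval still sits inside $[x_0,x_1]$ and the restricted functions supply their own strongly definable extensions, Theorem \ref{theorem-model-completeness-test} does apply and delivers the sharper conclusion of strong model completeness; it is exactly the two endpoints that obstruct this stronger route.
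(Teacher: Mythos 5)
Your proposal is correct and follows exactly the paper's own route: the paper justifies this theorem purely by citing Wilkie's model completeness result for restricted Pfaffian functions \cite{wilkie1996}, applied to the chain (containing $a_0$, $a_1$, $q$, $Y$) restricted to $[x_0,x_1]$ and set to zero outside. Your additional observations — the affine normalisation of the interval and the reason the endpoint behaviour blocks the paper's Theorem \ref{theorem-model-completeness-test} — are sound elaborations of details the paper leaves implicit.
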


An important class of second order linear differential equations is the class of hypergeometric equations. We treat them in the following section.

\section{Hypergeometric Equations and Functions}\label{hypergeometric-functions}

In this section we treat the case of the hypergeometric differential equation. We first summarize the basics about hypergeometric differential equations and the Gauss hypergeometric functions (one of the solutions). Then we prove a model completeness result for expansions of the reals by suitable restrictions of the hypergeometric function and the unrestricted exponential function. The next subsection contains results about definability and model completeness for expansions of the reals by the real and imaginary parts of hypergeometric functions. Finally we deal with a particular case of ${_2F_1}({^1\!/_2},{^1\!/_2};1;z)$, which has a close relation to the modular functions.

\subsection{Preliminaries}

We summarize here some facts about the hypergeometric functions and their respective second order linear differential equations (see, for instance, \cite[Chapter 2]{erdelyi1953}, or \cite[Chapter XVI]{whittaker-watson}).

The \emph{hypergeometric differential equation} is the equation
$$
z(1-z)Y''+[c-(a+b+1)z]Y'-abY=0,
$$
where $a,b,c\in\mathbb{C}$, $(-c)\not\in\mathbb{N}$.

One solution is given for $|z|<1$ by Gauss's \emph{hypergeometric series}
$$
{_2F_1}(a,b;c;z)=\sum_{n=0}^{\infty}\frac{(a)_n(b)_n}{(c)_n\,n!}z^n,
$$
where $(x)_n$ is the Pochhammer symbol, defined as $(x)_0=1$ and $(x)_{n+1}=(x)_n(x+n)$, for all $n\geq 0$. It is easy to see that
$$
\frac{d}{dz}\,{_2F_1}(a,b;c;z)=\frac{\Gamma(a)\Gamma(b)}{\Gamma(c)}\,{_2F_1}(a+1,b+1;c+1;z),
$$
where $\Gamma(z)$ is the Gamma Function. Except the cases where $a$ or $b$ is a non positive integer where ${_2F_1}(a,b;c;z)$ is a polynomial, the hypergeometric functions have branching points at $z=1$ and $z=\infty$. They are single valued in the complex plane minus the real interval $[1,\infty)$.

\emph{Euler's Formula} \cite[\S\,2.1.3 (10)]{erdelyi1953} allows us to define analytic continuations of the hypergeometric function. It is the integral
$$
F(a,b;c;z)=\frac{\Gamma(c)}{\Gamma(b)\Gamma(c-b)}\int_0^1t^{b-1}(1-t)^{c-b-1}(1-zt)^{-a}\,dt,
$$
for $|\arg(1-z)|<\pi$, $\Re(c)>\Re(b)>0$.

We apply these results to definability and model completeness results in what follows.

\subsection{The Real Case}

We treat firstly the case of one real variable hypergeometric functions because they can be formalized in the Pfaffian function setting.

Here we restrict the parameters $a,b,c$ to the real numbers. Euler's formula in this case holds for $z$ in the real interval $(-\infty,1)$, for $c>b>0$. In that interval the integrand is positive and so the (real) hypergeometric functions do not assume the value zero. The function $q(x)=F'(a,b;c;x)/F(a,b;c;x)=F(a+1,b+1;c+1;x)/F(a,b;c;x)$ is defined for all $x\in(-\infty,1)$.

If we restrict the functions $f(x)=F(a,b;c;x)$ and $q(x)=\frac{F'(a,b;c;x)}{F(a,b;c;x)}$ to any interval $[x_0,x_1]$, with $x_0<x_1<1$, and defined as zero elsewhere, then Wilkie's method \cite{wilkie1996} gives the following result.

\begin{theorem}
The first order theory of $\langle \mathbb{R},0,1,+,\cdot,-,f,q\rangle$ is model complete.
\end{theorem}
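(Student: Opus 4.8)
The plan is to present this as a corollary of the preceding theorem: I will exhibit an explicit Pfaffian chain, defined and analytic on a neighbourhood of $[x_0,x_1]$, whose Pfaffian closure contains the coefficients of the equation together with the non-vanishing solution $f$ and its logarithmic derivative $q$, and then remove the auxiliary chain functions from the language.

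First I would put the hypergeometric equation in normal form $Y''+a_1(x)Y'+a_0(x)Y=0$, reading off the rational coefficients
$$a_1(x)=\frac{c-(a+b+1)x}{x(1-x)},\qquad a_0(x)=\frac{-ab}{x(1-x)},$$
whose only poles are at $x=0$ and $x=1$. Setting $r(x)=1/(x(1-x))$ one computes $r'=(2x-1)r^2$, so $\{r\}$ is a Pfaffian chain on any interval missing $0$ and $1$; since $x_1<1$ this is the case on $[x_0,x_1]$ as soon as $0\notin[x_0,x_1]$. Substituting $a_1=(c-(a+b+1)x)\,r$ and $a_0=-ab\,r$ into the Riccati equation $q'+q^2+a_1q+a_0=0$ yields
$$q'=-q^2-(c-(a+b+1)x)\,r\,q+ab\,r,$$
a polynomial in $x,r,q$, while the solution itself satisfies $f'=qf$. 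Listed in the order $r,q,f$, each derivative is a polynomial in $x$ and the preceding functions, so $\{r,q,f\}$ is a Pfaffian chain. By Euler's integral, valid here for $c>b>0$ with positive integrand, $f=F(a,b;c;x)$ does not vanish on $(-\infty,1)$, so it is a genuine non-zero solution and $q=f'/f$ is analytic throughout $[x_0,x_1]$.

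With this chain in place the preceding theorem applies and gives that $\langle\mathbb{R},0,1,-,+,\cdot,\tilde r,\tilde q,\tilde f\rangle$ is model complete, where I write $\tilde r,\tilde q,\tilde f$ for the restrictions to $[x_0,x_1]$ extended by zero, so that $\tilde f,\tilde q$ are the functions $f,q$ of the statement. It then remains to delete $\tilde r$. Since $f>0$ exactly on $[x_0,x_1]$, the support interval is quantifier-free definable as $\{x:\tilde f(x)>0\}$, and on it $r$ is pinned down by the field identity $r\cdot x\cdot(1-x)=1$; hence the graph of $\tilde r$ is quantifier-free definable from $\tilde f$ and the ring operations,
$$\tilde r(x)=y\iff\bigl(\tilde f(x)>0\wedge y\cdot x\cdot(1-x)=1\bigr)\vee\bigl(\tilde f(x)=0\wedge y=0\bigr).$$
Substituting this definition for each occurrence of $\tilde r$ and pulling out fresh existential quantifiers turns any existential $\{\tilde r,\tilde q,\tilde f\}$-formula into an existential $\{\tilde q,\tilde f\}$-formula; since every formula of the smaller language is, over the larger structure, equivalent to an existential one, model completeness descends to the reduct $\langle\mathbb{R},0,1,+,\cdot,-,f,q\rangle$.

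I expect the algebra of the chain to be routine, being forced by the Riccati reduction already carried out in the previous section; the two points needing care are the hypotheses of the preceding theorem rather than its application. The first is analyticity of the chain on a neighbourhood of $[x_0,x_1]$, which fails at the singular points and so requires $0,1\notin[x_0,x_1]$ — the pole at $1$ being excluded automatically by $x_1<1$. The second, and the only genuinely structural step, is the language reduction: it hinges on the strict positivity of $f$ on $(-\infty,1)$, which is what makes the support interval, and through it the auxiliary function $r$, definable from $f$ alone.
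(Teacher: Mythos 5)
Your proposal is correct and is essentially the paper's own argument: the paper's entire proof consists of the observation that $F(a,b;c;\cdot)$ is positive on $(-\infty,1)$ by Euler's integral (so that $q=f'/f$ is analytic there), followed by an appeal to the preceding theorem on Pfaffian chains for the Riccati system --- which is exactly your chain $r,q,f$. You additionally make explicit two points the paper glosses over, and both are genuine: the concrete chain with $r=1/(x(1-x))$, $r'=(2x-1)r^2$, and the elimination of the auxiliary symbol $\tilde r$ from the language. The latter is a real issue, since model completeness need not pass to reducts, and your rewriting of existential formulas via the graph of $\tilde r$ (definable from $\tilde f$ and the ring operations, with $>$ itself existentially defined using squares) is what justifies quoting the theorem for the language containing only $f$ and $q$.

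The one discrepancy is scope: your proof needs $0\notin[x_0,x_1]$, while the paper asserts the theorem for any $x_0<x_1<1$. This is not a defect of your argument relative to the paper's: the paper's route has the identical restriction, because the hypothesis of its preceding theorem requires a Pfaffian chain, analytic on a neighbourhood of $[x_0,x_1]$, containing the coefficients $a_0,a_1$, and these have a pole at the regular singular point $x=0$; no chain on a neighbourhood of $0$ can contain them, and whether $f$ and $q$ themselves are Pfaffian near such a singular point is precisely the kind of question left open in Problems \ref{open-problem-1} and \ref{Riccati-Pfaffian}. So for intervals containing $0$ neither your proof nor the paper's applies; you are simply explicit about a restriction the paper leaves unstated.
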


The method of Section \ref{model-completeness-test} above is more appropriate to the complex case, studied below.

If we restrict the functions $f$ and $q$ to the unbounded interval $(-\infty,1)$, and define as zero elsewhere, we may have a logarithmic singularity at $x=1$. Therefore the best we can prove at the moment is the following result, based on Wilkie's \cite{wilkie1999}.

\begin{theorem}
 The structure $R_H=\langle \mathbb{R},0,1,+,\cdot,-,f,q\rangle$ is o-minimal.
\end{theorem}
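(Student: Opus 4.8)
The plan is to exhibit $R_H$ as an expansion of the real field by unrestricted Pfaffian functions and then to apply the o-minimality criterion of Wilkie \cite{wilkie1999}, whose \emph{theorem of the complement} reduces o-minimality to a uniform bound on the number of connected components of the sets defined by the generating functions together with all their partial derivatives. The first step is to record the Pfaffian system that $f$ and $q$ satisfy on the open interval $(-\infty,1)$. By the Euler integral, $f(x)=F(a,b;c;x)>0$ on this interval, so $f$, $q=f'/f$ and $1/f$ are real analytic throughout $(-\infty,1)$ and satisfy $f'=qf$ and $(1/f)'=-q\,(1/f)$. Writing the hypergeometric equation in the variable $q=Y'/Y$ yields the Riccati equation $q'+q^2+\frac{c-(a+b+1)x}{x(1-x)}\,q-\frac{ab}{x(1-x)}=0$; adjoining $r(x)=1/\bigl(x(1-x)\bigr)$, which obeys the Pfaffian relation $r'=-(1-2x)\,r^2$, turns $r,q,f,1/f$ into a genuine Pfaffian chain on each of the two intervals $(-\infty,0)$ and $(0,1)$.

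Next I would dispose of the interior point $x=0$, where $r$ is singular. This point is a regular singular point of the hypergeometric equation at which the distinguished solution is regular: $f$ is analytic with $f(0)=1$, and hence $q=f'/f$ is analytic across $0$ as well (with $q(0)=ab/c$). Consequently no accumulation of connected components can occur at $0$, and the contribution of a small neighbourhood of $0$ is finite purely by analyticity. On each of $(-\infty,0)$ and $(0,1)$ Khovanskii's theorem \cite{khovanskii1980}, applied to the Pfaffian chain $r,q,f,1/f$, bounds the number of connected components of $\{x:T(x)=0\}$ for any term $T$; splicing these bounds with the analytic contribution at $0$ produces the uniform finiteness on all of $(-\infty,1)$ demanded by the theorem of the complement.

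It remains to treat the endpoint $x=1$, where $f$ and $q$ are cut off to zero and where, as $x\to 1^-$, the data need not extend analytically: depending on the sign of $c-a-b$ the function $f$ tends to a finite limit, grows like a power of $1-x$, or (when $c=a+b$) diverges logarithmically, and correspondingly $q$ typically blows up like $(1-x)^{-1}$. This is exactly the phenomenon that obstructs the analytic-continuation hypotheses of Theorems \ref{model-completeness-W-system} and \ref{model-completeness-W-system-exp}, and it is the reason we retreat from strong model completeness to mere o-minimality: the point of \cite{wilkie1999} is that its finiteness criterion tolerates such singular, possibly unbounded, cut-off behaviour at the boundary, whereas the machinery of Section \ref{model-completeness-test} does not. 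The behaviour as $x\to-\infty$ is harmless, since $f$ is not cut off there and decays like a power of $|x|$.

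The hard part is precisely this endpoint analysis at $x=1$: one must verify that the singular germ of $f$ (and the pole-type germ of $q$), combined with the discontinuous extension by zero, still falls within the format required by \cite{wilkie1999}, so that no definable subset of $\mathbb{R}$ inherits infinitely many connected components from the interaction of this boundary singularity with the first-order language. Fitting the cut-off hypergeometric pair $(f,q)$ into the exact hypotheses of Wilkie's criterion, and checking its finiteness clauses in the unbounded case, is the step that carries all the weight; everything else is the routine Pfaffian bookkeeping of the first two paragraphs.
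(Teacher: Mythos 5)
Your overall strategy --- exhibit $f$ and $q$ as Pfaffian on their domain and then appeal to the o-minimality technology stemming from Wilkie's theorem of the complement --- is exactly the paper's: indeed the paper offers no proof beyond the observation that $f$ and $q$ are Pfaffian on $(-\infty,1)$ together with the citation of \cite{wilkie1999}. Your first two paragraphs are a correct and useful elaboration of what the paper leaves implicit: the chain $r,q,f,1/f$ with $r'=-(1-2x)r^2$, $q'=-q^2-[c-(a+b+1)x]\,rq+ab\,r$, $f'=qf$, $(1/f)'=-q\,(1/f)$ is a genuine Pfaffian chain away from $x=0$, and splitting the domain at the pole of $r$, where $f$ and $q$ are in any case analytic, is the right move.

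The gap is in the last step, which you yourself label as the one ``that carries all the weight'' and then do not perform; and the way you propose to perform it rests on a misattribution. It is not the case that the criterion of \cite{wilkie1999} ``tolerates'' functions that are Pfaffian only on a proper open set and are cut off by zero at a singular boundary point: Wilkie's Pfaffian application of the theorem of the complement is to chains of functions defined on all of $\mathbb{R}^n$, whereas your $f$ has a branch-point (power or logarithmic) singularity at $x=1$ and $q$ a pole-type one, so neither extends to a total Pfaffian function. Handling chains on proper open semialgebraic domains, with arbitrary behaviour at the boundary, is precisely the content of the later work of Karpinski--Macintyre and of Speissegger's Pfaffian closure theorem (this is what the survey \cite{speissegger2012}, cited elsewhere in the paper, is about), so the honest completion is a citation of those results rather than a re-verification of Wilkie's axioms: the graphs of $q$ over $(-\infty,0)$ and over $(0,1)$ are Rolle leaves of the polynomial $1$-form $x(1-x)\,dy+\bigl(x(1-x)y^2+[c-(a+b+1)x]y-ab\bigr)\,dx$ (closed embedded separating curves in simply connected semialgebraic open sets), the graph of $f$ is then a Rolle leaf of a $1$-form with coefficients definable over the resulting structure, and hence $f$ and $q$ --- glued across $x=0$ by analyticity and extended by zero on the semialgebraic set $[1,\infty)$ --- are definable in the Pfaffian closure of the real field, which is o-minimal; $R_H$ is a reduct of that structure and therefore o-minimal. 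Without this (or some equivalent) final argument, your proposal stops exactly where the proof has to begin, while the ``routine Pfaffian bookkeeping'' you did carry out was never the issue.
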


At the present knowledge, we can only state the following question.

\begin{problem}\label{pfaffian-hypergeometric-model-complete}
Is the theory of $R_H$ model-complete?
\end{problem}

A positive answer to this problem would imply that the unrestricted exponential function is existentially definable in such structure (it is definable, by \cite{miller1994}). Notice though that $f(x)=f(x_0)\exp\left(\int_{x_0}^xq(t)\,dt\right)$.

\subsection{The Complex Case}

The hypergeometric series ${_2F_1}(a,b;c;z)$ define an analytic function if $|z|<1$ which can be analytically continued to the complex plane minus the real interval $[1,\infty)$, in general with branching points at $z=1$ and $z=\infty$. It defines a solution to the hypergeometric differential equation
$$
z(1-z)Y''+[c-(a+b+1)z]Y'-abY=0.
$$

If the parameter $c\not\in\mathbb{Z}$, then a second solution is $y_2(z)=z\sp{1-c}{_2F_1}(1+a-c,1+b-c;2-c;z)$, defined in $\mathbb{C}\setminus\left((-\infty,0]\cup(-\infty,0]\right)$, with branching point at $z=0$.

Let $F_0(z)={_2F_1}(a,b;c;z)$ if $z\in\mathbb{C}\setminus[1,\infty)$, $F_1(z)=F_0'(z)$ (the derivative), $F_2(z)={_2F_1}(1+a-c,1+b-c;2-c;z)$, $z\in\mathbb{C}\setminus[1,\infty)$, $F_3(z)=F_2'(z)$, and $F_j(z)=0$ elsewhere, $j=0,1,2,3$.

We firstly prove an auxiliary result.

\begin{lemma}[Monodromy]\label{monodromy-lemma-F}
The main branch of the functions $F_0(z)$ and $F_2(z)$ (and their derivatives. $F_1$ and $F_3$), are defined in the domain $\mathbb{C}\setminus[1,\infty)$, where we choose $\arg(1-z)\in(-\pi,\pi)$. The analytic continuations to $-3\pi<\arg(z)<-\pi$ and $\pi<\arg(z)<3\pi$ are given by linear combinations of $F_0$ and $F_2$, (respectively, of $F_1$ and $F_3$).
\end{lemma}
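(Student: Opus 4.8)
The plan is to analyze the monodromy of the hypergeometric equation around its singular points. The equation has regular singular points at $z=0$, $z=1$, and $z=\infty$, and the functions $F_0$ and $F_2$ are the two standard Frobenius solutions at $z=0$ (with exponents $0$ and $1-c$). Since the solution space of the second order linear ODE is two-dimensional away from the singular points, any analytic continuation of a solution along a loop must return as a linear combination of any fixed basis of solutions. My strategy is therefore to fix $\{F_0, F_2\}$ as a basis on the simply connected domain $\mathbb{C}\setminus[1,\infty)$ (with the branch choice $\arg(1-z)\in(-\pi,\pi)$) and to track what happens to this basis under continuation into the adjacent sheets $\pi<\arg(z)<3\pi$ and $-3\pi<\arg(z)<-\pi$.

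First I would set up the domain carefully: the cut along $[1,\infty)$ makes $\mathbb{C}\setminus[1,\infty)$ simply connected, so $F_0$, $F_1$, $F_2$, $F_3$ are single-valued and analytic there. Crossing into the sheet $\pi<\arg(z)<3\pi$ corresponds to continuing the solutions once around the branch locus. Because $F_0$ and $F_2$ form a fundamental system, their continuations $\tilde F_0$, $\tilde F_2$ must satisfy $\tilde F_j = \alpha_j F_0 + \beta_j F_2$ for suitable constants $\alpha_j,\beta_j\in\mathbb{C}$ determined by the local monodromy. Differentiating these linear relations immediately gives the corresponding statement for $F_1=F_0'$ and $F_3=F_2'$, namely that their continuations are the same linear combinations of $F_1$ and $F_3$; this handles the parenthetical claim about the derivatives with no extra work. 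The same argument applied symmetrically yields the result for the sheet $-3\pi<\arg(z)<-\pi$.

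The key technical input is that continuation across the cut genuinely keeps us within the span of $\{F_0,F_2\}$ rather than escaping the solution space. This follows because the coefficients of the hypergeometric equation are single-valued rational (hence meromorphic) functions on $\mathbb{C}$, so the continued functions still satisfy the same equation on the overlap region and therefore lie in its two-dimensional solution space. To pin down the actual constants $\alpha_j,\beta_j$ one uses the known local exponents: near $z=0$ the branching is governed by the monodromy of $z^{1-c}$, and near $z=1$ by the exponent $c-a-b$; the classical connection formulas for ${_2F_1}$ relating solutions at different singular points give explicit values. I expect the main obstacle to be bookkeeping of branch choices and orientation, i.e. being careful about which loop (around $z=0$ versus around $z=1$) corresponds to entering each sheet and ensuring the branch of $\arg(1-z)$ is consistently tracked across the cut, since a sign or orientation error here propagates into wrong connection coefficients. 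The existence of the linear combination, however, is essentially automatic from the two-dimensionality of the solution space, so the statement as phrased — that the continuations are linear combinations of $F_0$ and $F_2$ — follows cleanly from the linear ODE structure even before the explicit constants are computed.
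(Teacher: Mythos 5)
Your overall framework---analytic continuation of solutions of a second-order linear ODE with single-valued coefficients must stay inside the two-dimensional solution space---is the right kind of argument, and it is indeed what underlies the classical formulas that the paper invokes (the paper's whole proof is the citation of \cite[\S\ 2.7.1, Formulas 1--3, p. 93]{erdelyi1953}, so your route is genuinely more self-contained). However, your pivotal step is false: $F_0$ and $F_2$ do \emph{not} form a fundamental system of one and the same equation. $F_0={_2F_1}(a,b;c;z)$ solves the hypergeometric equation with parameters $(a,b,c)$, while $F_2={_2F_1}(1+a-c,1+b-c;2-c;z)$ solves the equation with parameters $(1+a-c,1+b-c,2-c)$; the Frobenius solution at $z=0$ with exponent $1-c$ is $z^{1-c}F_2(z)$, not $F_2(z)$---exactly as the paper says when it introduces $y_2(z)=z^{1-c}\,{_2F_1}(1+a-c,1+b-c;2-c;z)$. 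Consequently two-dimensionality gives that the continuation of $F_0$ across the cut is $\alpha F_0+\beta\,z^{1-c}F_2$, and (running your argument on the other equation) that of $F_2$ is $\gamma F_2+\delta\,z^{c-1}F_0$. The power factors cannot be absorbed into the constants: for generic parameters $\beta\neq 0$, and then the continued branch has a branch point at $z=0$ (since $c\notin\mathbb{Z}$), whereas any constant-coefficient combination $\alpha F_0+\beta F_2$ is analytic at $z=0$; equivalently, such a combination with $\beta\neq 0$ is not even a solution of the $(a,b,c)$-equation, so it can never arise as a continuation of $F_0$. Thus the conclusion you call ``essentially automatic'' does not follow from your premises, and with constant coefficients it is false in general.

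The same error breaks the parenthetical claim about derivatives: differentiating the correct relation gives $\tilde F_1=\alpha F_1+\beta(1-c)z^{-c}F_2+\beta z^{1-c}F_3$, which is not a combination of $F_1$ and $F_3$ alone. To repair your proof you must work with the basis $\{F_0,\,z^{1-c}F_2\}$ (the standing hypothesis $c\notin\mathbb{Z}$ is needed precisely here), carry the factors $z^{\pm(1-c)}$ through the monodromy computation, and then read the lemma's ``linear combinations'' as allowing these power functions as coefficients---which is how the explicit circuit formulas in Erd\'elyi are actually written. A smaller but related point: the adjacent sheets should be described by $\arg(1-z)$, not $\arg(z)$ (you inherited the statement's slip). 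Being precise about this matters for your argument, both because crossing the cut $[1,\infty)$ is monodromy around $z=1$, and because once the $z^{1-c}$ term appears the continued branch is no longer single-valued on a full copy of $\mathbb{C}\setminus[1,\infty)$, so the domain of the ``adjacent branch'' has to be restricted accordingly.
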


\begin{proof}
This is a direct application of \cite[\S\ 2.7.1, Formulas 1--3, p. 93]{erdelyi1953}. 
\end{proof}

\begin{theorem}
 The main branch and adjacent branches of $F_j(z)$, $0\leq j\leq 3$, are definable in $\mathbb{R}_{\mathit{an},\exp}$.
\end{theorem}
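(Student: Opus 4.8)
The plan is to reduce everything to the main branches of $F_0$ and $F_2$ and then to invoke Euler's integral representation together with a parametrised--integration theorem. First I would dispose of the adjacent branches: by the Monodromy Lemma \ref{monodromy-lemma-F} the continuations of $F_0,F_2$ (resp.\ of $F_1,F_3$) across the cut are fixed $\mathbb{C}$-linear combinations $\alpha F_0+\beta F_2$ whose constants $\alpha,\beta$ depend only on $a,b,c$; since $\mathbb{R}_{\mathrm{an},\exp}$ expands the real field and definability allows real parameters, the real and imaginary parts of any such combination are definable as soon as those of $F_0,F_2$ are. Likewise I would dispose of the derivatives: writing $F_0=u+iv$ one has $F_1=F_0'=\partial u/\partial x+i\,\partial v/\partial x$, and partial derivatives of definable $C^1$ functions are again definable in any o-minimal expansion of the real field; the same applies to $F_3=F_2'$. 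Thus it suffices to prove that the real and imaginary parts of the main branches of $F_0$ and $F_2$, viewed as functions of $(x,y)$ on $\mathbb{C}\setminus[1,\infty)$, are definable in $\mathbb{R}_{\mathrm{an},\exp}$.

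Since $F_2$ is again a Gauss function with shifted parameters, the two cases are identical, so I focus on $F_0(z)={_2F_1}(a,b;c;z)$. The key is Euler's Formula from the Preliminaries, which represents $F_0$ on the whole slit plane $|\arg(1-z)|<\pi$, i.e.\ exactly on $\mathbb{C}\setminus[1,\infty)$, by
$$
F_0(z)=\frac{\Gamma(c)}{\Gamma(b)\Gamma(c-b)}\int_0^1 t^{b-1}(1-t)^{c-b-1}(1-zt)^{-a}\,dt,
$$
valid when $\Re(c)>\Re(b)>0$; the remaining parameter ranges I would reach from this one through the classical contiguity relations, which express ${_2F_1}$ with integer-shifted parameters by means of $F_0$, $F_0'$ and rational functions of $z$. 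For $z\in\mathbb{C}\setminus[1,\infty)$ and $t\in[0,1]$ one checks that $1-zt\notin(-\infty,0]$, so the principal branch of $(1-zt)^{-a}=\exp(-a\log(1-zt))$ is well defined, and for the real parameters relevant here (in particular for ${_2F_1}(\sfrac{1}{2},\sfrac{1}{2};1;z)$) its real and imaginary parts are built from $\exp$, $\log$, the restricted trigonometric functions of the bounded quantity $\arg(1-zt)\in(-\pi,\pi)$, and the restricted powers $t^{b-1}$, $(1-t)^{c-b-1}$ --- all definable in $\mathbb{R}_{\mathrm{an},\exp}$. Hence the integrand is a definable family in $(t,x,y)$, and I would conclude by the integration theorem of Lion and Rolin, together with its log--exp refinements, that a parametrised integral of such a family is again definable in $\mathbb{R}_{\mathrm{an},\exp}$; this yields the definability of $\Re F_0$ and $\Im F_0$ over the entire, unbounded domain in one stroke.

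The main obstacle is precisely this last step, since integration does not in general preserve definability in $\mathbb{R}_{\mathrm{an},\exp}$, and one must verify that the present integrand falls within the scope of the available theorems. Two points need care. First, the endpoint singularities at $t=0,1$ coming from $t^{b-1}$ and $(1-t)^{c-b-1}$ are integrable precisely under Euler's hypotheses $\Re(b)>0$, $\Re(c-b)>0$, and they must be controlled in the preparation and blow-up underlying the integration theorem; for genuinely complex $a,b,c$ one must moreover rule out oscillatory terms of the form $\cos(\lambda\log|1-zt|)$ surviving as $z\to\infty$, which is automatic in the real case of interest. Second, near $z=1$ and $z=\infty$ the connection formulae introduce the complex powers $(1-z)^{c-a-b}$, $(-z)^{-a}$, and, in the resonant cases $c-a-b\in\mathbb{Z}$ or $a-b\in\mathbb{Z}$, additional logarithmic factors; all of these are individually definable through $\exp$ and $\log$, but matching the branch conventions consistently so that the single analytic $F_0$ is recovered is the delicate bookkeeping. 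Should the integration route prove awkward at the singular points, the same conclusion can be obtained more explicitly by covering $\mathbb{C}\setminus[1,\infty)$ with finitely many charts --- Taylor expansions at ordinary points (restricted analytic, hence in $\mathbb{R}_{\mathrm{an}}$), the standard expansion at $z=1$, and the connection formula at $z=\infty$ --- and gluing them, each non-analytic ingredient being a restricted analytic function multiplied by a definable power or logarithm.
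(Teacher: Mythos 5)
Your proposal reaches the right conclusion, but by a genuinely different route from the paper's. You share the first reduction (the Monodromy Lemma disposes of the adjacent branches), and your observation that $F_1=F_0'$ and $F_3=F_2'$ come for free, because derivatives of definable functions are definable, is a legitimate shortcut where the paper merely says the other cases are analogous. The divergence is in the main step: the paper never integrates anything. It compactifies the domain with the rational map $z\mapsto 4z/(z+1)^2$, which carries the unit disk onto $\mathbb{C}\setminus[1,\infty)$ and pulls the two singular points $1,\infty$ back to $z=\pm 1$; it then removes these two logarithmic branch points with Erd\'elyi's connection formula (\S\,2.9, Formula (35) of \cite{erdelyi1953}), so that the main branch of $F_0$ becomes a restricted analytic function on a compact disk combined definably with $\exp$, $\log$ and $\arctan$ --- definability in $\mathbb{R}_{\mathit{an},\exp}$ is then immediate, with no integration theory required. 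You instead keep the unbounded domain and integrate Euler's formula as a definable family in $(t,x,y)$, which is uniform and avoids all branch bookkeeping, but it makes the whole proof rest on parametrised-integration theorems whose scope is precisely the delicate point, as you partly acknowledge. Lion--Rolin applies to globally subanalytic integrands; yours is globally subanalytic only when the exponents $b-1$, $c-b-1$, $-a$ are rational (for instance for $K(z)=\frac{\pi}{2}\,{_2F_1}(\sfrac{1}{2},\sfrac{1}{2};1;z)$, where the integrand is semialgebraic times restricted trigonometric functions of the bounded quantity $\arg(1-zt)$, so the classical theorem does apply). For irrational real parameters the integrand involves genuine real powers of subanalytic functions, and one needs the much more recent ``power-constructible'' integration theorems of Cluckers, Comte, Rolin and Servi rather than Lion--Rolin plus folklore refinements; for genuinely complex parameters the integrand --- and indeed the function itself, which then oscillates along rays to $\infty$ --- falls outside any o-minimal framework, so the statement must be read with real parameters, as both you and the paper implicitly do. Finally, your fallback plan (finitely many charts, the logarithmic expansion at $z=1$, the connection formula at $\infty$, each piece a restricted analytic function times definable powers and logarithms) is sound, and it is in essence the paper's actual proof, the conformal map being a slicker way of packaging exactly that gluing.
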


\begin{proof}
The Monodromy Lemma \ref{monodromy-lemma-F} implies the definability of the adjacent branches of $F_j(z)$, $0\leq j\leq 3$, once we prove the definability of their main branches.

\smallskip

The function $\arctan x$, $x\in\mathbb{R}$ is definable in $\mathbb{R}_{\mathit{an},\exp}$, because of the formula
$$
\arctan(1/x)=\frac{\pi}{2}-\arctan(x)=2\arctan(1)-\arctan(x),~~x\neq 0,
$$
so it is definable from its restriction to the interval $[-1,1]$. This function together with the real exponential function allow us to define any particular branch of the complex logarithm. Therefore, we can define the power function $z^{1-c}$, for any $c\in\mathbb{C}$.

\noindent\dotfill

We only prove that $F_0(z)$ is definable in $\mathbb{R}_{\mathit{an},\exp}$. The proof for the other functions are analogous. Formulas for analytic continuation allow us to define other branches of $F_0$ (see \cite[\S\ 2.10, Formulas (1)-(6), pp. 108-109]{erdelyi1953}).

\smallskip

The function $z\mapsto z^2$ maps the region $\{z\in\mathbb{C}:\Re(z)>0\}=\{z\in\mathbb{C}:|\arg(z)|<\pi/2\}$ onto the region $\{z\in\mathbb{C}:|\arg(z)|<\pi\}$, so the function $z\mapsto [1-(1-z)^2]=(2z-z^2)$ maps $\{z\in\mathbb{C}:|\arg(1-z)|<\pi/2\}$ onto $\{z\in\mathbb{C}:|\arg(1-z)|<\pi\}$.
The M\"obius transformation $z\mapsto[2z/(z+1)]$ maps the unit disk $D_{1}(0)$ onto $\{z\in\mathbb{C}:|\arg(1-z)|<\pi/2\}$; maps the arc $e^{i\theta}$, $-\pi<\theta<0$, onto the ray $1-iy$, $y>0$; and maps the arc $e^{i\theta}$, $0<\theta\leq 0$ onto the ray $1+iy$, $y>0$. It maps the interval $[1,\infty)$ onto the interval $[1,2)$; the interval $(-\infty,-1]$ onto $[2,\infty)$, and $(-1,1)$ onto $(-\infty,1)$.
Therefore the composition of these functions $z\mapsto w=[2z/(z+1)]\mapsto(2w-w^2)=[4z/(z+1)^2]$ maps the disk $D_{1}(0)$ onto the region $\{z\in\mathbb{C}:|\arg(1-z)|<\pi\}$.

The function $G(z)=F_0(4z/(z+1)^2)$ is analytic in the unit disk $D_1(0)$ with branching points at $z=1$ and $z=-1$. These can be removed using the formula \cite[\S\ 2.9, Formula (35), p. 107]{erdelyi1953}.

This gives the desired definability result.
\end{proof}

Now let $RF_j(x,y)=\Re(F_j(x+iy))$, $IF_j(x,y)=\Im(F_j(x+iy))$, $0\leq j\leq 3$.

\begin{theorem}
 The first order theory of the structure $R_H=\langle\mathbb{R},\mathit{constants},-$, $+,\cdot,RF_0$, $RF_1$, $RF_2$, $RF_3$, $IF_0$, $IF_1$, $IF_2$, $IF_3,\exp,\arctan\rangle$ is o-minimal and strongly model complete.
\end{theorem}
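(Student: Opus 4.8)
The plan is to prove the two assertions separately, deriving o-minimality from the preceding definability theorem and strong model completeness from the test of Section~\ref{model-completeness-test}. For o-minimality, I would observe that $R_H$ is a definitional reduct of $\mathbb{R}_{\mathit{an},\exp}$: the preceding theorem shows that the main and adjacent branches of each $F_j$ are definable in $\mathbb{R}_{\mathit{an},\exp}$, so their real and imaginary parts $RF_j,IF_j$ are definable there (being the coordinate components of the definable graph $(x,y)\mapsto(\Re F_j,\Im F_j)$), while $\exp$ is a primitive of $\mathbb{R}_{\mathit{an},\exp}$ and $\arctan$ is definable in it. Hence every $R_H$-definable subset of $\mathbb{R}$ is $\mathbb{R}_{\mathit{an},\exp}$-definable, and since the latter structure is o-minimal, o-minimality of $R_H$ follows immediately, as o-minimality passes to reducts.

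For strong model completeness I would verify the hypotheses of Theorem~\ref{model-completeness-W-system} for suitable compact restrictions of the $RF_j,IF_j$ and then call on Theorem~\ref{model-completeness-W-system-exp} to absorb the unrestricted exponential. The parts $RF_j,IF_j$ are strongly definable in $R_H$ simply because they are primitives, so their graphs are quantifier-free, a fortiori strongly, definable. Applying Proposition~\ref{real-imaginary} to each $F_j$ then yields strong definability of the complex analytic continuations $\tilde{F}_R,\tilde{F}_I$ on half-size poly-disks, which is hypothesis~(1). For hypothesis~(2) I would use the hypergeometric equation to write $F_0''=\bigl([(a+b+1)z-c]F_0'+ab\,F_0\bigr)/\bigl(z(1-z)\bigr)$, and inductively every higher derivative of $F_0$, as a rational-function combination of $F_0$ and $F_1=F_0'$, and similarly for $F_2,F_3$. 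Consequently every partial derivative of each $RF_j,IF_j$ is a polynomial in the eight functions $RF_k,IF_k$, the coordinates, and the strongly definable real and imaginary parts of $1/(z(1-z))$, hence strongly definable by Lemma~\ref{lema-basic-properties} and Proposition~\ref{real-imaginary}; this collapses the infinite family of derivatives onto the finite chain already in the language.

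To fit the unrestricted functions into the setting of Theorem~\ref{model-completeness-W-system-exp}, I would note that restricted $\exp$ is a trivial restriction of the primitive $\exp$, and that restricted $\sin$ is strongly definable from $\arctan$: inverting $\arctan$ gives $\tan$ on $(-\pi/2,\pi/2)$, whence $\sin=\tan/\sqrt{1+\tan^2}$ and $\cos=1/\sqrt{1+\tan^2}$ there, extended to $[0,\pi]$ by the reflection identities, all with unique witnesses; and unrestricted $\arctan$ is strongly definable from its restriction to $[-1,1]$ through $\arctan(1/x)=2\arctan(1)-\arctan(x)$. The unrestricted $RF_j,IF_j$ and their adjacent branches are then strongly definable from the compact building blocks by inverting the reparametrization $z\mapsto 4z/(z+1)^2$ of the preceding theorem, by Euler's integral formula, and by the monodromy identities of Lemma~\ref{monodromy-lemma-F}. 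Strong model completeness of $R_H$ follows from Theorem~\ref{model-completeness-W-system-exp}.

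The main obstacle I anticipate is tracking the qualifier \emph{strongly} through each of these definability steps: every analytic continuation across the cut $[1,\infty)$, every monodromy relation, the inversion of the Möbius-and-squaring reparametrization, and the removal of the removable singularities at $z=0,\pm1$ must be presented by an existential formula whose witness is unique. The reparametrization inversion and the passage to adjacent branches are the delicate points, since there one must single out one branch, that is one witness, rather than the whole multivalued continuation; I would control this using the explicit argument conventions already fixed in Lemma~\ref{monodromy-lemma-F} (taking $\arg(1-z)\in(-\pi,\pi)$ and the corresponding ranges for the adjacent sectors), which pin down a unique witness at each stage.
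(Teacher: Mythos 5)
Your proposal is correct and follows essentially the same route as the paper: o-minimality is inherited from $\mathbb{R}_{\mathit{an},\exp}$ via the preceding definability theorem, and strong model completeness comes from the test of Section~\ref{model-completeness-test} by pulling the $F_j$ back under $z\mapsto 4z/(z+1)^2$, using the monodromy relations of Lemma~\ref{monodromy-lemma-F} to supply the adjacent branches (hence the analytic continuation to $D_2(0)$), and recovering all higher derivatives from the hypergeometric differential equation. Your write-up is in fact more explicit than the paper's own proof at the points where the unrestricted $\exp$ and $\arctan$ and the restricted sine are absorbed via Theorem~\ref{model-completeness-W-system-exp}, and in checking that each continuation step preserves uniqueness of witnesses.
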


\begin{proof}
The o-minimality is a consequence of the previous theorem and the o-minimality of $\mathbb{R}_{\mathit{an},\exp}$.

Model completeness follows from the method of Section \ref{model-completeness-test}.

The transformation $z\mapsto 4z/(z+1)^2$ maps the unit disk to the region $\mathbb{C}\setminus[1,\infty)$, so $G_j(z)=F_j(z)$ is defined in the unit disk, if $H_{0,j}(z)$ and $H_{1,j}(z)$ are the two adjacent branches of $F_j(z)$, then $H_{i,j}(4z/(1+z)^2)$ are defined in the region outside the disk (one in each halfplane $\mathbb{H}$ and $-\mathbb{H}$). This gives the definition of the analytic continuation of $G_j(z)$ to the disk $D_2(0)$, required by Theorem \ref{theorem-model-completeness-test}. The derivatives of all orders of these functions can be defined from the corresponding second order hypergeometric differential equation.
\end{proof}

\subsection{The Function $\boldsymbol{{_2F_1}(\sfrac{1}{2},\sfrac{1}{2};1;z)}$}

In this section we consider the \emph{complete elliptic integral of the first kind}, which is a particular case of hypergeometric functions,
$$
K(z)=\int_0^1\frac{1}{\sqrt{1-t^2}\sqrt{1-zt^2}}\,dt=\int_0^{\pi/2}\frac{d\theta}{\sqrt{1-z\sin^2\theta}}=\frac{\pi}{2}\cdot{_2F_1}(\sfrac{1}{2},\sfrac{1}{2};1;z).
$$

This is one of the solutions to the equation
$$
(z-z^2)Y''+(1-2z)Y'-\frac{Y}{4}=0,
$$
which has the functions $y_1(z)=K(z)= (\pi/2)\,{_2F_1}(\sfrac{1}{2},\sfrac{1}{2};1;z)$ and $y_2(z)=iK(1-z)$ as a pair of $\mathbb{C}$-linearly independent solutions.

We can view $K(z)$ as a multivalued complex analytic function in the variable $z\in\mathbb{C}\setminus\{1\}$, or a single valued function with the variable $z$ restricted to the domain $\mathbb{C}\setminus[1,\infty)$ (the complex numbers minus the set of real numbers greater or equal to 1). The point $z=1$ is a logarithmic branch point for the integral and so for each $z\neq 1$ there are infinitely many possible values for $K(z)$ (one for each \textit{branch} fo $K(z)$).

We choose the main branch of $K(z)$, for $z\in \mathbb{C}\setminus[1,\infty)$, by choosing the positive square roots in the integrand when $0<z<1$ and taking their analytic continuations. We intend to prove a model completeness result for expansions of the field of real numbers by the real and imaginary parts of $K(z)$. In order to do this we should be able to define analytic continuations to other branches of $K(z)$.

We do this in two steps. Firstly we define an extension of $K(z)$ for real $z>1$ and then we extend to the other branches using the \textit{monodromy matrices}.

Recall that an \emph{argument of a complex number} $w\in\mathbb{C}$, $w\neq 0$ is some $\theta\in\mathbb{R}$, denoted $\arg(w)$, such that $w=|w|\,e^{i\,\theta}$.

\begin{lemma}[Monodromy]\label{monodromy-lemma}
The main branch of the function $K(z)$ and its derivative $K'(z)$, are defined in the domain $\mathbb{C}\setminus[1,\infty)$, where we choose $\arg(1-z)\in(-\pi,\pi)$. These can be continued to the real interval $[1,\infty)$ and to adjacent branches ($-3\pi<\arg(1-z)<\pi$, and $\pi<\arg(1-z)<3\pi$)  by linear combinations of $K(z)$ and $K(1-z)$, and of $K'(z)$ and $K'(z)$, respectively.
\end{lemma}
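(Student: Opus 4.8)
The plan is to treat this as a standard local-monodromy computation for the second order hypergeometric equation
$$
(z-z^2)Y''+(1-2z)Y'-\tfrac{1}{4}Y=0,
$$
exploiting that $K(z)$ and $K(1-z)$ form a fundamental system of solutions. First I would record the starting point from the preliminaries: the main branch of $K(z)$ is holomorphic on $\mathbb{C}\setminus[1,\infty)$, fixed by taking the positive square roots in Euler's integral for $0<z<1$ and continuing analytically, and this is precisely the sheet on which $\arg(1-z)\in(-\pi,\pi)$. Approaching the cut $[1,\infty)$ from above (resp.\ below) sends $\arg(1-z)\to-\pi$ (resp.\ $+\pi$), so the two adjacent branches of the statement, with $\arg(1-z)\in(\pi,3\pi)$ and $\arg(1-z)\in(-3\pi,-\pi)$, are exactly the analytic continuations obtained by carrying $\arg(1-z)$ continuously across $+\pi$ and across $-\pi$, i.e.\ by taking $z$ once around the branch point $z=1$ in either sense.

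The qualitative half of the lemma is then immediate from the theory of linear ordinary differential equations: analytic continuation of any solution along a loop in $\mathbb{C}\setminus\{0,1\}$ that avoids the singular points again yields a solution of the same equation; since the solution space is two-dimensional and $\{K(z),K(1-z)\}$ is a basis in a slit neighbourhood of $z=1$, each continuation of $K(z)$ must be a $\mathbb{C}$-linear combination of $K(z)$ and $K(1-z)$. The corresponding statement for the derivative follows by differentiating the resulting identity (or by applying the same argument to the differentiated equation), giving the continuation of $K'(z)$ as a linear combination of $K'(z)$ and $K'(1-z)$.

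To pin down the actual combination --- that is, to see that the monodromy is really unipotent --- I would use the resonant structure at $z=1$. Because $c-a-b=1-\tfrac{1}{2}-\tfrac{1}{2}=0$, the point $z=1$ is a logarithmic singular point: one solution, namely $K(1-z)$, is holomorphic there, while the second has the form $A(1-z)+C\,\log(1-z)\,K(1-z)$ with $A$ holomorphic near $z=1$ and $C$ a connection constant. The main branch $K(z)$ is (a normalisation of) this second solution, so a loop about $z=1$ leaves $A$ and $K(1-z)$ fixed and sends $\log(1-z)\mapsto\log(1-z)\pm2\pi i$; hence the continued branch equals $K(z)\pm2\pi i\,C\,K(1-z)$, which is exactly the asserted linear combination.

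The main obstacle is this last step: correctly identifying the logarithmic local solution at $z=1$ and computing the connection constant $C$ with the right sign and normalisation. This is precisely the resonant case $c=a+b$ of the $z=0\leftrightarrow z=1$ connection problem for ${_2F_1}$, whose coefficients are Gamma-function ratios, and the cleanest fully rigorous route is to quote the explicit continuation formulas of \cite[\S\,2.9--2.10]{erdelyi1953}, just as the analogous Lemma \ref{monodromy-lemma-F} was deduced from \cite[\S\,2.7.1]{erdelyi1953}. Everything else is bookkeeping about which sheet the chosen range of $\arg(1-z)$ selects.
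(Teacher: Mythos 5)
Your proposal is correct, but it reaches the monodromy by a genuinely different route than the paper. The paper never invokes the two-dimensionality of the solution space: it extends $K$ to the cut $[1,\infty)$ directly, by an explicit choice of branch inside Euler's integral (writing $z=k^2$, $k>1$, splitting the integral at $t=1/k$ and setting $\sqrt{1-k^2t^2}=-i\sqrt{k^2t^2-1}$ on the second piece), and then quotes the boundary-value formulas of \cite[\S\,2.7.1, Formulas (4)--(5), p.~95]{erdelyi1953}, namely $\lim_{y\to 0^+}K(x-iy)=K(x)$ and $\lim_{y\to 0^+}K(x+iy)=K(x)-2iK(1-x)$, from which the unipotent monodromy matrix $\left(\begin{smallmatrix}1 & -2i\\ 0 & 1\end{smallmatrix}\right)$ is read off. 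You instead split the lemma into a qualitative half (continuation along loops in $\mathbb{C}\setminus\{0,1\}$ stays inside the two-dimensional solution space spanned by $K(z)$ and $K(1-z)$, so it is a linear combination of these; differentiation of that identity handles $K'$, silently correcting the statement's typo ``$K'(z)$ and $K'(z)$'' to $K'(z)$ and $K'(1-z)$) and a quantitative half (the resonant logarithmic expansion at $z=1$, valid since $c-a-b=0$, forces the monodromy to be $K(z)\mapsto K(z)\pm 2\pi i\,C\,K(1-z)$). The qualitative half already proves the lemma as literally stated, with essentially no computation; and your quantitative half rests on exactly the expansion the paper itself uses later as formula $(\ast)$ (\cite[\S\,2.7.1, Formula (6), p.~95]{erdelyi1953}), which gives $C=-1/\pi$ and hence the same coefficient $\mp 2i$ as the paper's matrix. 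What the paper's route buys is an explicit, integral-based construction of the extension of $K$ to the cut itself with continuity from below --- which is what the subsequent definability theorem actually uses --- while your route buys conceptual clarity and generality (it works verbatim for arbitrary parameters $a,b,c$, resonant or not), at the price of deferring the sign and normalisation bookkeeping to Erd\'elyi's connection formulas, i.e.\ to the same tables the paper cites.
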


\begin{proof}
We prove the result for the analytic continuation of $K(z)$ to the branch $-3\pi<\arg(1-z)<\pi$, and to the interval
$[1,\infty)$ with $\arg(1-z)=-\pi$. The other cases are analogous.

Write $z=k^2$, for $k>1$. We write the integral as the sum of integrals in the intervals $0\leq t\leq 1/k$, where $\sqrt{1-k^2t}$ is real, and $1/k<t\leq 1$, where the square root is pure imaginary, and here we make an explicit choice $\sqrt{1-k^2t}=-i\sqrt{k^2t-1}$. Therefore
$$
K(k^2)=\int_0\sp{1/k}\frac{1}{\sqrt{1-t^2}\sqrt{1-k^2t^2}}\,dt+i\int_{1/k}^1\frac{1}{\sqrt{1-t^2}\sqrt{k^2t^2-1}}\,dt,
$$
where we choose the branch $\sqrt{w}>0$, for $w>0$.

With this choice, the function $K(z)$ becomes discontinuous in the set $[1,\infty)$, but continuous from below, that is, for $x>1$,
$$
\lim_{y\to 0\sp{+}}K(x-iy)=K(x),~~\lim_{y\to 0\sp{+}}K(x+iy)=-2iK(1-x)+K(x),
$$
(see \cite[Section 2.7.1, Formulas (4) and (5), p. 95]{erdelyi1953}).

From this, we obtain the monodromy matrix
$$
M=\left(
\begin{array}{rr}
1 & -2i \\
0 & 1
\end{array}
\right)
$$

This finishes the proof.
\end{proof}

\begin{theorem}
The main branch of the function $K(z)$ and its derivative $K'(z)$, $z\in\mathbb{C}\setminus[1,\infty)$, and their analytic continuation to the adjacent branches are definable in $\mathbb{R}_{\mathit{an},\mathrm{exp}}$.
\end{theorem}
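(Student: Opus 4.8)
The plan is to reduce everything to the definability of the main branches of $K$ and $K'$, and then to obtain these by pulling $K$ back to the unit disc along the same rational conformal map used in the complex case above. By the Monodromy Lemma \ref{monodromy-lemma}, the continuations of $K$ and $K'$ to the adjacent branches $-3\pi<\arg(1-z)<\pi$ and $\pi<\arg(1-z)<3\pi$ are fixed $\mathbb{C}$-linear combinations of $K(z),K(1-z)$ and of $K'(z),K'(1-z)$, with the constant entries of the monodromy matrix $M$. Since $z\mapsto 1-z$ is affine (hence definable) and carries the relevant regions into the domain $\mathbb{C}\setminus[1,\infty)$ of the main branch (after a harmless subdivision to avoid the cut), the adjacent branches are definable as soon as the main branches are. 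So it suffices to treat the main branch of $K$, the argument for $K'$---a constant multiple of the resonant hypergeometric function ${_2F_1}(\sfrac{3}{2},\sfrac{3}{2};2;z)$---being identical.

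First I would record that the map $\phi(z)=4z/(z+1)^2$ restricts to a biholomorphism of the open unit disc $D_1(0)$ onto $\mathbb{C}\setminus[1,\infty)$ (it is two-to-one on $\mathbb{C}$ with $\phi(z)=\phi(1/z)$, so the disc is a fundamental domain), with definable inverse $\phi^{-1}(w)=(2-w-2\sqrt{1-w})/w$, the square root being the definable principal branch on $\mathbb{C}\setminus[1,\infty)$. Thus $K=G\circ\phi^{-1}$, where $G(z)=K(\phi(z))$ is analytic on $D_1(0)$, and the whole problem becomes the definability of $G$ on the disc. On any slightly smaller closed disc $G$ is a restricted analytic function, hence definable in $\mathbb{R}_{\mathit{an}}\subseteq\mathbb{R}_{\mathit{an},\exp}$; the genuine content is the behaviour of $G$ as $z\to\pm 1$, the two boundary points lying over the branch points $w=1$ and $w=\infty$ of $K$.

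This is exactly where the present case diverges from the theorem on the $F_j$ above: there $c\not\in\mathbb{Z}$ and the singularity was the algebraic factor $z^{1-c}$, removable by a power substitution, whereas here $c=a+b=1$ puts us in the \emph{resonant} (logarithmic) case, with coincident exponents both at $w=1$ and at $w=\infty$. Accordingly the connection formulas of \cite[\S\,2.9--2.10]{erdelyi1953} give, near $w=1$, an expansion $K(w)=P(1-w)+Q(1-w)\log(1-w)$ with $P,Q$ holomorphic at $0$. The key simplification is the identity
$$
1-\phi(z)=\left(\frac{z-1}{z+1}\right)^{2},
$$
so that near $z=1$ one has
$$
G(z)=P\!\left(\Bigl(\tfrac{z-1}{z+1}\Bigr)^{2}\right)+Q\!\left(\Bigl(\tfrac{z-1}{z+1}\Bigr)^{2}\right)\cdot 2\log\!\frac{z-1}{z+1}.
$$
The two coefficient factors are compositions of the restricted analytic functions $P,Q$ with a rational map, hence restricted analytic and definable; the remaining factor is the complex logarithm of a rational function, definable in $\mathbb{R}_{\mathit{an},\exp}$ by the construction of $\log$ from $\exp$ and $\arctan$ recalled above. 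The singularity at $z=-1$ (over $w=\infty$) is treated identically using the logarithmic expansion at infinity. Patching these local definitions with the restricted-analytic interior of the disc shows that $G$, and therefore $K=G\circ\phi^{-1}$, is definable; the same scheme handles $K'$, and the adjacent branches then follow from the first paragraph.

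I expect the main obstacle to be precisely this resonant logarithmic case: one must extract the correct holomorphic coefficients $P,Q$ in the connection formula, verify that the singular factor is genuinely a definable complex logarithm of the rational quantity $(z-1)/(z+1)$ on each relevant sector, and check that the branch choices in $\phi^{-1}$, in $\sqrt{1-w}$, and in $\log$ are mutually consistent with the chosen main branch of $K$. These are bookkeeping matters rather than conceptual ones, but they are where the real work lies.
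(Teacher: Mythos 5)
Your proposal is correct and, in all essentials, it is the paper's own proof: the Monodromy Lemma \ref{monodromy-lemma} reduces the adjacent branches to the main one, the main branch is pulled back to the unit disk by $\phi(z)=4z/(z+1)^2$ using the identity $1-\phi(z)=\bigl((1-z)/(1+z)\bigr)^2$, the logarithmic singularity over $w=1$ is removed by Erd\'elyi's connection formula --- your expansion $K(w)=P(1-w)+Q(1-w)\log(1-w)$ is exactly the paper's formula $(\ast)$, with $Q(1-w)=-K(1-w)/\pi$ --- and the complex logarithm is definable from $\exp$ and $\arctan$. The one point of genuine divergence is the branch point over $w=\infty$: the paper reduces it to the already-treated point $w=1$ by means of the Pfaff transformation $K\bigl(z/(z-1)\bigr)=\sqrt{1-z}\,K(z)$ together with the M\"obius map $z\mapsto z/(z-1)$, which swaps $1$ and $\infty$ and preserves the cut $[1,\infty)$, whereas you invoke the logarithmic expansion of ${_2F_1}$ at infinity (the resonant case $a=b$) directly at $z=-1$. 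Your variant is legitimate and more uniform, but it carries one obligation the paper's route avoids, which you should make explicit: that expansion contains the algebraic factor $(-w)^{-1/2}$ as well as $\log(-w)$, so one must check that $-1/w=(z+1)^2/(-4z)$ is a perfect square near $z=-1$, whence $(-w)^{-1/2}=(z+1)/(2\sqrt{-z})$ pulls back to a single-valued analytic function and only the logarithm genuinely survives. Two items of the bookkeeping you flagged settle cleanly: writing the singular factor as $2\log\bigl((1-z)/(1+z)\bigr)$ rather than $2\log\bigl((z-1)/(z+1)\bigr)$ removes all branch corrections, since $(1-z)/(1+z)$ maps the disk into the right half-plane, where the principal logarithm satisfies $\log(u^2)=2\log u$ exactly; and your local descriptions really do cover the disk, because the $w=1$ expansion is valid on $\{\Re z>0\}$, the $w=\infty$ expansion on $\{|1+z|^2<4|z|\}$, and the union of these two contains the whole annulus $2-\sqrt{3}<|z|<1$. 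Finally, the explicit formula for $\phi^{-1}$ is dispensable: since $\phi$ is injective on the disk, the graph of $K$ is the definable (indeed strongly definable) set $\{(\phi(z),G(z)):|z|<1\}$, which is effectively how the paper passes from the pulled-back functions to $K$ in Theorem \ref{model-complete-K}.
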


\begin{proof}
We prove the result only for $K(z)=(\pi/2){_2F_1}(\sfrac{1}{2},\sfrac{1}{2};1;z)$ because for its derivative $K'(z)=(\pi/2){_2F_1}(\sfrac{3}{2},\sfrac{3}{2};2;z)$ the argument is analogous.

We use change of variables to transform the function into an analytic function defined in an open neighbourhood of a closed disk of finite radius. The main branch complex function $\log(1-z)$ is definable in $\mathbb{R}_{\mathit{an},\exp}$, and we use it to remove the logarithmic singularities at $z=1$ and $z=\infty$.

\medskip

We recall that the function $z\mapsto z^2$ maps the region $\{z\in\mathbb{C}:\Re(z)>0\}=\{z\in\mathbb{C}:|\arg(z)|<\pi/2\}$ onto the region $\{z\in\mathbb{C}:|\arg(z)|<\pi\}$, so the function $z\mapsto [1-(1-z)^2]=(2z-z^2)$ maps $\{z\in\mathbb{C}:|\arg(1-z)|<\pi/2\}$ onto $\{z\in\mathbb{C}:|\arg(1-z)|<\pi\}$.
The M\"obius transformation $z\mapsto[2z/(z+1)]$ maps the unit disk $D_{1}(0)$ onto $\{z\in\mathbb{C}:|\arg(1-z)|<\pi/2\}$; maps the arc $e^{i\theta}$, $-\pi<\theta<0$ onto the ray $1-iy$, $y>0$; and maps the arc $e^{i\theta}$, $0<\theta\leq 0$ onto the ray $1+iy$, $y>0$. It maps the interval $[1,\infty)$ onto the interval $[1,2)$; the interval $(-\infty,-1]$ onto $[2,\infty)$, and $(-1,1)$ onto $(-\infty,1)$.
Therefore the composition of these functions $z\mapsto w=[2z/(z+1)]\mapsto(2w-w^2)=[4z/(z+1)^2]$ maps the disk $D_{1}(0)$ onto the region $\{z\in\mathbb{C}:|\arg(1-z)|<\pi\}$.

The function $F(z)=K(4z/(z+1)^2)$ is analytic in the disk $D_{1}(0)$  and has logarithmic singularities at $z=1$ and $z=-1$. This is the composition of $G(w)=K(2w-w^2)$ with $w=2z/(z+1)$. The function $G(w)$ admits analytic continuation $\tilde{G}(w)$ to $\mathbb{C}\setminus[1,\infty)$ (by Lemma \ref{monodromy-lemma}). Therefore, $F(z)$ admits (definable) analytic continuation to $\mathbb{C}\setminus\left[ (-\infty,-1]\cup [1,\infty)\right]$.

\medskip

The following equation from \cite[\S\,2.7.1, Formula (6), p. 95]{erdelyi1953} is the key to eliminate the logarithmic singularities
$$
\frac{\pi}{2}\cdot{_2F_1}(\sfrac{1}{2},\sfrac{1}{2};1;z)+\frac{\log(1-z)}{2}\cdot{_2F_1}(\sfrac{1}{2},\sfrac{1}{2};1;1-z)={}
$$
$$
{}=\sum_{n=0}^{\infty}\left[\frac{({^1\!/_2})_n}{n!}\right]^2[\psi(n+1)-\psi(n+{^1\!/_2})](1-z)^n, \eqno{(\ast)}
$$
where $\psi(z)$ is the derivative of Euler's Gamma Function $\Gamma(z)$. Notice that its left hand side equals
$$
K(z)+\frac{\log(1-z)}{\pi}K(1-z).
$$

\smallskip

The formula $(\ast)$ above says that $K(z)+\frac{\log(1-z)}{\pi}K(1-z)$ is analytic in a disk around $z=1$. So we can isolate this branch point.

The second branch point lies at $\infty$ and we isolate it making the M\"obius transformation $z\mapsto z/(z-1)$, which maps the real interval $[1,\infty)$ onto itself and maps the point $\infty$ to $z=1$.

$G(z)=K((1-z)^2)$ is defined in the half plane $\Re(z)<1$. $H(z)=G(1/z)$ is defined in the open disk $D_{\sfrac{1}{2}}(\sfrac{1}{2})=\{z\in\mathbb{C}:{}$ $|z-\sfrac{1}{2}|<\sfrac{1}{2}\}$. It can be analytically continued around all points of its boundary, except the points $z=0$ and $z=1$ (which are logarithmic singularities).

The transformation $z\mapsto z/(z-1)$ maps $1\mapsto\infty$, $\infty\mapsto 1$, fixes $z=0$, maps the real interval $[0,1)$ onto the interval $(-\infty,0]$ (and \textit{vice-versa}), and keep invariant the real interval $[1,\infty)$. Then $K(z/(z-1))$ is also defined in the same domain as $K(z)$. Euler's integral produces
$$
K\left(\frac{z}{z-1}\right)=\frac{\pi}{2}\sqrt{1-z}\int_0^1\frac{dt}{\sqrt{t}\sqrt{1-t}\sqrt{1-z(1-t)}}={}
$$
$$
{}=\frac{\pi}{2}\sqrt{1-z}\int_0^1\frac{dt}{\sqrt{t}\sqrt{1-t}\sqrt{1-zt}}=\sqrt{1-z}K(z),
$$
where the last but one equality comes from the change of variables $t\mapsto (1-t)$ (this is called the Pfaff transformation). We must choose the branches of the square roots $\sqrt{1-z}$ and $\sqrt{1-z(1-t)}$ to provide the correct branch of $\sqrt{1-zt}$, that is, if $z\in(0,1)$, $\sqrt{1-z}>0$ and $\sqrt{1-z(1-t)}>0$.

\medskip

The function $z\mapsto(2z/(z+1)^2)$ maps the disk $D_{\rho}((\eta+1)/2\eta)$, where $\rho=(\eta^2-1)/2\eta$ ($0<\eta<1$), onto the disk $D_{\delta}(1)$, where $\delta=1-4\eta/(\eta+1)^2$. If $f(z)$ is an analytic function defined on $D_{\rho^2}(1)$, then $g(z)=f(2z/(z+1)^2)$ is analytic in $D_{1}(1)\supseteq \overline{D_{\delta}(1)}$ (the closure of $D_{\delta}(1)$). We take $f_1(z)=K(z)+\log(1-z)\,K(z)/\pi$, which is analytic in the disk $D_1(1)$ and its restriction to $\overline{D_{\delta}(1)}$ is definable in $\mathbb{R}_{\mathit{an},\exp}$.

Therefore $K(z)$ restricted to $\mathbb{C}\setminus[1,\infty)$ (and defined as zero in $[1,\infty)$ is definable in $\mathbb{R}_{\mathit{an},\exp}$.
\end{proof}

This allows us to prove the following model completeness result.

\medskip

Let $RK_k(x,y)$ and $IK_k(x,y)$, $k=0,1$, be the real and imaginary parts of the functions $K(x+iy))$ and its derivative $K'(x+iy)$, respectively, for $x+iy\not\in [1,\infty)$, and defined as zero in that interval.

\begin{theorem}\label{model-complete-K}
The theory of the structure $R_K=\langle\mathbb{R}$, constants, $+,\cdot,-$, $\exp$, $\arctan$, $RF_0,RF_1$, $IF_0,IF_1\rangle$ is strongly model complete, where $\exp$ and $\arctan$ are unrestricted exponential and arctangent functions.
\end{theorem}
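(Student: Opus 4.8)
The plan is to reduce the statement to the model completeness test of Section \ref{model-completeness-test}, following the proof of the general complex hypergeometric theorem specialised to $a=b=\sfrac{1}{2}$, $c=1$. Since $\exp$ and $\arctan$ appear unrestricted, I would first separate their contribution: by Theorem \ref{model-completeness-W-system-exp} it suffices to establish strong model completeness for the system in which these functions are replaced by their restrictions to compact intervals, after which the unrestricted exponential can be adjoined; and $\arctan$ is recovered from its restriction to $[-1,1]$ via $\arctan(1/x)=2\arctan(1)-\arctan(x)$, so it fits the restricted-analytic setting as well. The core task is then to verify the hypotheses of Theorem \ref{theorem-model-completeness-test} for the real functions $RK_0,RK_1,IK_0,IK_1$, the real and imaginary parts of $K$ and $K'$: each of them, together with all of its partial derivatives of every order, must admit a strongly definable real analytic extension to a box twice as large as its domain.

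By Proposition \ref{real-imaginary} this reduces to exhibiting strongly definable complex analytic extensions of $K$ and $K'$ on poly-disks. Here I would invoke the change of variables $z\mapsto 4z/(z+1)^2$, which carries the unit disk $D_1(0)$ onto $\mathbb{C}\setminus[1,\infty)$ (its inverse is algebraic, involving $\sqrt{1-w}$, and hence strongly definable); thus $G_k(z)=K^{(k)}(4z/(z+1)^2)$, $k=0,1$, is analytic on $D_1(0)$. To meet the ``twice as large'' requirement I would continue each $G_k$ to $D_2(0)$ using the Monodromy Lemma \ref{monodromy-lemma}, which expresses the branches adjacent to the main branch of $K$ (and of $K'$) as explicit linear combinations of $K(z)$ and $K(1-z)$ (respectively of their derivatives). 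Precomposing these combinations with $4z/(z+1)^2$ continues $G_k$ across the two arcs of the unit circle, one adjacent branch serving the part of $D_2(0)$ in $\mathbb{H}$ and the other the part in $-\mathbb{H}$, exactly as in the general case.

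The derivatives of all orders are then supplied by the equation itself. Writing $K''=\frac{1}{z(1-z)}\bigl[(2z-1)K'+\tfrac14 K\bigr]$ and differentiating repeatedly expresses every $K^{(n)}$ with $n\geq2$ as a rational function of $z$ applied to $K$ and $K'$, with possible poles only at the regular singular points $z=0,1$; at $z=0$ the singularity is apparent, since $K$ is analytic there, so by the closure properties of Lemma \ref{lema-basic-properties} each $K^{(n)}$ is strongly definable on the boxes of a covering of $\mathbb{C}\setminus[1,\infty)$. Through the Cauchy--Riemann equations this renders all real partial derivatives of $RK_0,RK_1,IK_0,IK_1$ strongly definable, so Proposition \ref{real-imaginary} and Theorem \ref{theorem-model-completeness-test} yield strong model completeness of the restricted system; adjoining unrestricted $\exp$ via Theorem \ref{model-completeness-W-system-exp} completes the proof.

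I expect the one genuinely delicate point to be the two branch points. Under $z\mapsto 4z/(z+1)^2$ the points $z=1$ and $z=-1$ are sent to the logarithmic singularities of $K$ at $w=1$ and $w=\infty$, and both lie on the unit circle, hence in the interior of the target disk $D_2(0)$; the continuation built from adjacent branches is therefore analytic only on $D_2(0)$ cut along the rays issuing from $z=\pm1$, so no naive analytic extension is available for the boxes of the covering that abut these points. The remedy, and the step that needs the most care, is to subtract the explicit singular part before extending: near $w=1$ formula $(\ast)$ shows that $K(z)+\tfrac{\log(1-z)}{\pi}K(1-z)$ is analytic, and near $w=\infty$ the Pfaff transformation $K(z/(z-1))=\sqrt{1-z}\,K(z)$ isolates the remaining branch point, so that on the troublesome boxes the relevant functions become a strongly definable analytic piece plus a product of $\log$ and power functions with hypergeometric factors, all strongly definable in the language through $\exp$ and $\arctan$. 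Checking that these corrected extensions fit the doubled boxes required by Theorem \ref{theorem-model-completeness-test} is where the real work lies.
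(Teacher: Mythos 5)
Your proposal matches the paper's proof in all essentials: the same change of variables $z\mapsto 4z/(z+1)^2$ carrying the unit disk onto $\mathbb{C}\setminus[1,\infty)$, continuation of $G_j$ to $D_2(0)$ via the Monodromy Lemma \ref{monodromy-lemma}, derivatives of all orders supplied by the hypergeometric differential equation, the singular-part corrections via formula $(\ast)$ and the Pfaff transformation, and the reduction to Theorem \ref{theorem-model-completeness-test} with the unrestricted $\exp$ adjoined through Theorem \ref{model-completeness-W-system-exp}. The only organizational difference is that the paper concludes by noting that $R_K$ and the structure built on the restricted functions $RG_j, IG_j$ admit strong existential interpretations in each other, a transfer you carry out implicitly by observing that the inverse of the change of variables is algebraic and hence strongly definable.
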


\begin{proof}
We deduce from the proof of the previous theorem together with Theorem \ref{theorem-model-completeness-test}, page \pageref{theorem-model-completeness-test}, the model completeness of the expansion $\langle\mathbb{R}$, constants, $+,\cdot,-$, $\exp,\arctan,RG_0,RG_1$, $IG_0,IG_1\rangle$, where $RG_j(x,y)=\Re[K\sp{(j)}(4z/(z+1)^2)]$, $IG_j(x,y)=\Im[K\sp{(j)}(4z/(z+1)^2)]$, $j=0,1$, $|z|<1$ and defined as zero if $|z|\geq 1$.
This gives the definition of the analytic continuation of $G_j(z)$ to the disk $D_2(0)$, required by Theorem \ref{theorem-model-completeness-test}. The derivatives of all orders of these functions can be defined from the corresponding second order hypergeometric differential equation.

Each of the structures admit strong existential interpretations in the other, so both are strongly model complete.
\end{proof}

Because $\tau(z)=iK(1-z)/K(z)$ is the inverse of the modular function $z=\lambda(\tau)$ restricted to the set $\mathcal{F}=\{\tau\in\mathbb{H}: |\Re(\tau)|\leq 1;\, |2\tau\pm 1|\geq 1\}$ (see \cite[\S\ 4.4, pp. 76--84]{venkatachaliengar2012},  or \cite[\S\ 2.7.4, p. 99]{erdelyi1953}), we have the following corollary.

\begin{corollary}
The modular functions $\lambda, j:\mathcal{F}\to\mathbb{C}$ is strongly definable in the structure $R_K=\langle\mathbb{R},\mathit{constants},+,\cdot,-,\exp,\arctan,RF_0,RF_1$, $IF_0,IF_1\rangle$.
\end{corollary}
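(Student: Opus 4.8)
The plan is to read the corollary through the inversion relation $\tau(z)=iK(1-z)/K(z)$ quoted just above it: since this exhibits $\lambda$ restricted to $\mathcal{F}$ as the functional inverse of $\tau(z)$, and since $\Re K$ and $\Im K$ are already available in $R_K$ as the functions $RF_0,IF_0$, the graph of $\lambda$ can be written down directly from the equation defining $\tau$. Moreover, by Theorem \ref{model-complete-K} the structure $R_K$ is strongly model complete, so every definable set is automatically strongly definable; it therefore suffices to prove that $\lambda$ and $j$ are \emph{definable} in $R_K$, and the passage from ``definable'' to ``strongly definable'' is then free.

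To define $\lambda$ I would first note that $\mathcal{F}=\{\tau=s+it\in\mathbb{H}:|s|\leq 1,\ |2\tau\pm 1|\geq 1\}$ is a semialgebraic subset of $\mathbb{R}^2$, hence quantifier-free definable. Writing $z=x+iy$ and setting $a_1=\Re K(z)$, $a_2=\Im K(z)$, $b_1=\Re K(1-z)$, $b_2=\Im K(1-z)$, the numbers $a_1,a_2$ are definable functions of $(x,y)$ and $b_1,b_2$ are the same functions composed with the definable affine substitution $(x,y)\mapsto(1-x,-y)$. Here the classical theory is decisive: $\lambda$ maps the interior of $\mathcal{F}$ biholomorphically onto $\mathbb{C}\setminus\bigl((-\infty,0]\cup[1,\infty)\bigr)$, which is exactly the region on which both $K(z)$ and $K(1-z)$ are given by their main branches, so the four quantities above really are the values carried by $RF_0,IF_0$. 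Clearing the denominator in $\tau=i(b_1+ib_2)/(a_1+ia_2)$ turns the equation $\tau=iK(1-z)/K(z)$ into the two polynomial identities
$$
s\,(a_1^2+a_2^2)=a_2 b_1-a_1 b_2,\qquad t\,(a_1^2+a_2^2)=a_1 b_1+a_2 b_2,
$$
so that the graph of $\lambda$ over the interior of $\mathcal{F}$ is defined by the condition that $(a_1,a_2,b_1,b_2)$ take their (definable) values at $z$ and at $1-z$ and satisfy these two equations, with $(s,t)$ ranging over the interior of $\mathcal{F}$.

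This definable graph over the open part of $\mathcal{F}$ extends to all of $\mathcal{F}$ by continuity: $\lambda$ is holomorphic, hence continuous, on $\mathbb{H}\supseteq\mathcal{F}$, and $\mathcal{F}$ is the closure of its interior, so the graph of $\lambda|_{\mathcal{F}}$ is the topological closure of the graph just described, a definable operation in the o-minimal structure $R_K$. (Alternatively, the boundary arcs of $\mathcal{F}$ are mapped by $\lambda$ into $(-\infty,0)\cup(1,\infty)$, and there one invokes the definable continuations of $K$ to the adjacent branches supplied by the Monodromy Lemma \ref{monodromy-lemma}.) For $j$ I would then compose $\lambda$ with the classical rational identity
$$
j=256\,\frac{(\lambda^2-\lambda+1)^3}{\lambda^2(\lambda-1)^2}\,;
$$
on $\mathcal{F}\subseteq\mathbb{H}$ one has $\lambda\neq 0,1$, so the denominator never vanishes and $\Re j,\Im j$ are rational functions with real coefficients of the definable functions $\Re\lambda,\Im\lambda$, whence $j$ is definable. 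Strong model completeness then upgrades both $\lambda$ and $j$ to strongly definable.

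The genuinely substantial input is analytic rather than logical and is imported from the classical theory cited before the corollary: that $\tau(z)=iK(1-z)/K(z)$ inverts $\lambda|_{\mathcal{F}}$, that $K(z)\neq 0$ on the relevant region so the quotient is meaningful, and above all the identification of the image $\lambda(\mathrm{int}\,\mathcal{F})$ with the slit plane $\mathbb{C}\setminus\bigl((-\infty,0]\cup[1,\infty)\bigr)$, which is what makes the main branches of $K(z)$ and $K(1-z)$ the correct ones to use. Granting these facts, the only point needing real care is the behaviour on $\partial\mathcal{F}$, where $\lambda$ meets the branch cuts of $K$; I expect this to be the main obstacle, but it is controlled either by the o-minimal closure argument above or, more explicitly, by the definable branch continuations of Lemma \ref{monodromy-lemma}.
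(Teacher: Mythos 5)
Your proposal is essentially the paper's own argument: the paper justifies the corollary precisely by the inversion relation $\tau(z)=iK(1-z)/K(z)$, with the strong model completeness of $R_K$ (Theorem \ref{model-complete-K}) supplying the passage from definable to strongly definable, and it obtains $j$ from the same rational identity in $\lambda$ (stated in the remark immediately following the corollary). One small correction to your graph definition: since $RF_0$ and $IF_0$ are by convention zero on the cut $[1,\infty)$, your two displayed equations hold vacuously for any $(s,t)$ whenever $z\in[1,\infty)$ (both sides are $0$), so you must additionally impose the semialgebraic condition $z\notin[1,\infty)$ --- equivalently $a_1^2+a_2^2\neq 0$ --- to exclude these spurious points; with that clause added, your argument goes through as written.
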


\begin{remark}\rm
 There is an algebraic relation between $j(z)$ and $\lambda(z)$, namely,
 $$
 j(z)=\frac{256(1-\lambda+\lambda^2}{\lambda^2(1-\lambda)^2},
 $$
 which allows us to strongly define this function in the structure $R_K$. (See \cite[Chapter VII, \S\S\ 8-9, pp. 116-118]{chandrasekhar1985}. See also \cite{bianconi2015} for some model completeness results related to the modular $j$ function.)
\end{remark}

\subsection{Restriction to the Real Numbers}

If we restrict $K(z)$ and its derivative $K'(z)$  to the real interval $(-\infty,1)$, then they are Pfaffian functions, and therefore the expansion of the field of the real numbers by $K(z)$, and its derivative $K'(z)$, is o-minimal by \cite{wilkie1999}.

\begin{problem}\label{exp-definable-2F1-real}
Is the exponential function existentially definable from the restriction of the function ${_2F_1}(\sfrac{1}{2},\sfrac{1}{2};1;z)$ and its derivative ${_2F_1}'(\sfrac{1}{2},\sfrac{1}{2};1;z)$ (this is equal to ${_2F_1}(\sfrac{3}{2},\sfrac{3}{2};2;z)$) to the interval $(-1,1)$?
\end{problem}

Because the singularity at $x=1$ is logarithmic, the exponential function is certainly definable by \cite[Theorem, pp. 257-258]{miller1994}.

\section{On Wilkie's Conjecture}\label{wilkie-conjecture}

In this section we discuss Wilkie's conjecture, a counting statement about rational points in definable sets. In this context model completeness results may prove to be useful because definable sets are existentially definable, and so of low quantifier complexity.

Jonathan Pila and Alex Wilkie proved in \cite{pila-wilkie2006} a counting result about rational points in definable sets in the structure $\mathbb{R}\sb{\mathit{an},\exp}$ which proved to be useful in applications to algebraic geometrical problems, \cite{pila2011,klinger-ullmo-yafaev2014,gao2015}. In that paper it was conjectured a sharper counting result for the real exponential field, which could be true in some other particular cases (it is not true in the whole $\mathbb{R}_{\mathit{an},\exp}$). A first positive result is Binyamini and Novikov's \cite{binyamini-novikov2016}, where they prove it for the expansion of the real field by restricted sine and exponential functions. They use the strong model completeness techniques, which are not recursively computable. The restricted sine and exponential functions are Pfaffian functions, so they raise the question whether their proof can be done with the techniques of Macintyre and Wilkie, \cite{macintyre-wilkie1996}, which could give computable bounds to the counting arguments.

We recall the following definitions.

\begin{definition}[Algebraic Part of a Set]
Let $A\subseteq\mathbb{R}^n$ be a non empty set. The \emph{algebraic part} $A\sp{\mathit{alg}}\subseteq A$ of $A$ is the union of all connected semialgebraic subsets of $A$. The \emph{transcendental part} of $A$ is the set $A\sp{\mathit{trans}}=A\setminus A\sp{\mathit{alg}}$.
\end{definition}

\begin{definition}[Height of a Rational Number]
Let $r=a/b\in\mathbb{Q}$, with either $r=a=0$ or $\gcd(a,b)=1$. The \emph{height} of $r$ is the number $\max\{|a|,|b|\}$.
\end{definition}

Wilkie and Pila have proved for a set $A$ definable in $\mathbb{R}_{\mathit{an},\exp}$ that the number of points of $A\sp{\mathit{trans}}$ with rational coordinates with height at most $H$ is $\mathcal{O}(H^{\alpha})$, for some $\alpha>0$.

\smallskip

\textbf{Wilkie's Conjecture} is the bound $\mathcal{O}((\log H)^{\alpha})$ for sets definable in $\mathbb{R}_{\exp}$.

\begin{problem}\label{wilkie-conjecture-pfaffian}
Can we prove Wilkie's conjecture for expansions of the real field by a Pfaffian chain restricted to compact poly-intervals? If so, which cases can be done recursively?
\end{problem}

In another direction we have the following problem.

\begin{problem}\label{wilkie-conjecture-modular}
Is Wilkie's conjecture true for expansions of the real field by elliptic and modular functions?
\end{problem}

A positive answer to this problem would imply the original conjecture conjecture for the real exponential field because Peterzil and Starchenko have proved in \cite[Theorem 5.7, p. 545] {peterzil-starchenko-wp2004} that the real exponential function is definable from the $\wp$ function (actually, from the modular function $j(z)$, which is itself definable from $\wp(z)$).

\section{Final Remarks}\label{final-remarks}

The themes of Pfaffian functions and model completeness permeate this paper. It seems to be far from exhausted and the problems posed in this paper touch a few aspects of this subject.

Because we have focussed in the case of complex differential equations and hypergeometric functions, we have not touched in one of the major problems dealing with Pffafian functions. Alex Wilkie proved in 1991 (published in 1996, \cite{wilkie1996}) the model completeness of expansions of the real field by restricted Pfaffian functions, and in 1999 he proved that the expansion of the real field by unrestricted Pfaffian functions is o-minimal (see \cite{wilkie1999}). It remains to prove (or disprove) the model completeness of expansions by unrestricted Pfaffian functions (the positive answer is known today as \textit{van den Dries Conjecture}; see discussion in \cite{speissegger2012}). But this is another story.

\medskip

\textsc{Acknowledgements.} The author wishes to thank the organizers of the Workshop ``{\em Logic and Applications: in honour to Francisco Miraglia by the occasion of his 70th birthday}'', Professors Marcelo Esteban Coniglio, Hugo Luiz Mariano and Vinicius Cif\'u Lopes for the invitation to give a 50 minutes talk, and Professor Marcelo Esteban Coniglio by the encouragement to publish a paper on the subject of the talk in the \emph{South American Journal of Logic}. This forced us to organize the ideas to put it in a more or less definite form, which helped to finish the proofs of some of the conjectured results presented at the event. The author also wishes to thank Professor Francisco Miraglia, our old friend \emph{Chico}, which always inspired and encouraged us with his enthusiasm in our mathematical careers.

\clearpage
\addcontentsline{toc}{section}{References}


\noindent Department of Mathematics\\
Instituto de Matem\'atica e Estat\'istica (IME-USP)\\
University of S\~ao Paulo (USP)\\
Rua do Mat\~ao, 1010, Butant\~a, CEP 05508-090, S\~ao Paulo, SP,  Brazil\\ 
email: bianconi@ime.usp.br


\begin{thebibliography}{10}

\bibitem{bianconi2015}
Ricardo~Bianconi.
\newblock Model Complete Expansions of the Real Field by Modular Functions and Forms.
\newblock {\em South American Journal of Logic}, 1(1): 321--335, 2015.

\bibitem{bianconi2016}
Ricardo Bianconi.
\newblock Model Completeness for the Real Field with the Weierstrass $\wp$ Function.
\newblock {\em Proc. of the Edinburgh Math. Soc.} (to appear).

\bibitem{binyamini-novikov2016}
Gal~Binyamini,~Dmitry~Novikov.
\newblock Wilkie's Conjecture for Restricted Elementary Functions.
\newblock arxiv.org:1605-04671v1

\bibitem{birkhoff-rota1989}
Garrett~Birkhoff,~Gian-Carlo~Rota.
\newblock {\em Ordinary Differential Equations}, 4\textsuperscript{th} edition.
\newblock John Wiley \&\ Sons, New York, 1989.
\newblock Mathematical Reviews (AMS): MR0972977 (90h:34001)

\bibitem{chandrasekhar1985}
Komaralovu Chandrasekharan.
\newblock \emph{Elliptic Functions}.
\newblock \emph{Grundlehren der mathematische Wissenschaften}, 281.
\newblock Springer-Verlag, Berlin, 1985.
\newblock Mathematical Reviews (AMS): MR0808396 (87e:11058)

\bibitem{vddries98}
L. van den Dries.
\newblock {\em Tame topology and o-minimal structures}.
\newblock London Mathematical Society Lecture Note Series, 248.
\newblock Cambridge University Press, Cambridge, 1998.
\newblock Mathematical Reviews (AMS): MR1633348 (99j:03001)

\bibitem{erdelyi1953}
Arthur Erd\'elyi; Wilhelm Magnus; Fritz Oberhettinger; Francesco G. Tricomi.
\newblock {\em Higher Transcendental Functions}, Vol. 1.
\newblock {\em Bateman Manuscript Project}, California Institute of Technology.
\newblock McGraw-Hill Book Company, Inc, New York, 1953.
\newblock Mathematical Reviews (AMS): MR0058756 (15,419i)

\bibitem{ford1929}
Lester~R.~Ford.
\newblock {\em Automorphic Functions}.
\newblock McGraw-Hill Book Company, Inc., New York, 1929.

\bibitem{gao2015}
Z. Gao.
\newblock Towards the Andr\'e-Oort conjecture for mixed Shimura varieties: the Ax-Lindemann theorem and lower bounds for Galois orbits of special points.
\newblock {\em J. Reine Angew. Math.}, 2015.
\newblock DOI:10.1515/crelle-2014-0127

\bibitem{gabrielov-vorobjov2004}
A.~Gabrielov,~N.~Vorobjov.
\newblock {Complexity of computations with Pfaffian and Noetherian functions}.
\newblock In {\em Normal Forms, Bifurcations and Finiteness Problems in Differential Equations}, 211-250.
\newblock Kluwer, 2004.
\newblock Mathematical Reviews (AMS): MR2083248 (2006b:14104)

\bibitem{khovanskii1980}
Askold Khovanskii.
\newblock On a class of systems of transcendental equations.
\newblock {\em Soviet Math. Dokl.}  22 (3): 762--765, 1980.
\newblock Mathematical Reviews (AMS): MR0600749 (82a:14006)

\bibitem{klinger-ullmo-yafaev2014}
Bruno~Klingler,~Emmanuel~Ullmo,~Andrei~Yafaev.
\newblock The Hyperbolic Ax-Lindemann-Weierstrass conjecture.
\newblock arxiv.org:1307.3965.

\bibitem{macintyre2003}
Angus~Macintyre.
\newblock The Elementary theory of elliptic functions I: the formalism and a
special case.
\newblock In {\em O-minimal Structures, Lisbon 2003.
Proceedings of a Summer School by the
European Research and Training Network}, M\'ario J. Edmundo, Daniel Richardson, Alex Wilkie (eds.).
\newblock RAAG (Real Algebraic and Analytic Geometry
Preprint Server, University of Manchester; accessed in March, 2014)
\newblock Available in http://www.maths.manchester.ac.uk/raag/preprints/0159.pdf 

\bibitem{macintyre2008}
Angus~Macintyre.
\newblock Some observations about the real and imaginary parts of complex Pfaffian functions.
\newblock In {\em Model theory with applications to algebra and analysis}. Vol. 1, 215--223.
\newblock London Math. Soc. Lecture Note Ser., 349.
\newblock Cambridge Univ. Press, Cambridge, 2008.
\newblock Mathematical Reviews (AMS): MR2441381 (2009h:03044)

\bibitem{macintyre-wilkie1996}
Angus~Macintyre,~A.~J.~Wilkie.
\newblock On the decidability of the real exponential field.
\newblock In {\em Kreiseliana}, 441--467.
\newblock A K Peters, Wellesley, MA, 1996.
\newblock Mathematical Reviews (AMS): MR1435773

\bibitem{marker1997}
David Marker.
\newblock Khovanskii's theorem.
\newblock In {\em Algebraic model theory} (Toronto, ON, 1996), 181--193. 
\newblock Kluwer Acad. Publ., Dordrecht, 1997.
\newblock Mathematical Reviews (AMS): MR1481444  (99f:03048)

\bibitem{miller1994}
Chris Miller.
\newblock Exponentiation is Hard to Avoid.
\newblock {\em Proc. Amer. Math. Soc.}, 122 (1): 257--259, 1994.
\newblock Mathematical Reviews (AMS): MR1195484 (94k:03042)

\bibitem{peterzil-starchenko-wp2004}
Ya'acov Peterzil, Sergei Starchenko.
\newblock Uniform definability of the Weierstrass $\wp$ functions and generalized tori of dimension one. 
\newblock \emph{Selecta Math.} (N.S.) 10 (4) (2004): 525-550.
\newblock Mathematical Reviews (AMS): MR2134454 (2006d:03063)

\bibitem{pila2011}
Jonathan~Pila.
\newblock O-minimality and the Andre-Oort conjecture for $\mathbb{C}\sp{n}$.
\newblock {\em Annals of Math}. 173: 1779--1840, 2011.
\newblock Mathematical Reviews (AMS): MR2800724 (2012j:11129)

\bibitem{pila-wilkie2006}
Johnathan~Pila,~Alex~Wilkie.
\newblock The rational points of a definable set.
\newblock {\em Duke Math. J.} 133: 591--616, 2006.
\newblock Mathematical Reviews (AMS): MR 2228464

\bibitem{speissegger2012}
Patrick Speissegger.
\newblock Pfaffian sets and o-minimality.
\newblock In {\em Lecture Notes on O-minimal Structures and Real Analytic Geometry}, Chris Miller, Patrick Speissegger, Jena-Phillipe Rolin (editors){\em Fields Institute Communications} 62: 179-218.
\newblock Springer, New York, 2012.
\newblock Mathematical Reviews (AMS): MR2976993

\bibitem{tougeron1991}
Jean-Claude Tougeron.
\newblock Alg\`ebres  analytiques  topologiquement  n{\oe}th\'e\-riennes,  Th\'eo\-rie  de Hovanskii,
\newblock {\em Ann. Inst. Fourier}, 41: 823--840, 1991.
\newblock Mathematical Reviews (AMS): MR1150568 (93f:32005)

\bibitem{venkatachaliengar2012}
K. Venkatachaliengar, (edited by Shaun Cooper).
\newblock \emph{ Development of elliptic functions according to Ramanujan}.
\newblock World Scientific, New Jersey, 2012.
\newblock Mathematical Reviews (AMS): MR3113525

\bibitem{whittaker-watson}
E.~T.~Whittaker,~G.~N.~Watson.
\newblock {\em A Course on Modern Analysis}. An introduction to the general theory of infinite processes and of analytic functions; with an account of the principal transcendental functions.
\newblock Reprint of the fourth (1927) edition.
\newblock Cambridge Mathematical Library. Cambridge University Press, Cambridge, 1996.
\newblock Mathematical Reviews (AMS): MR1424469 (97k:01072)

\bibitem{wilkie1996}
A.~J.~Wilkie.
\newblock Model completeness results for expansions of the ordered field of real numbers by restricted Pfaffian functions and the exponential function. 
\newblock {\em J. Amer. Math. Soc}. 9 (4): 1051-1094, 1996
\newblock Mathematical Reviews (AMS): MR1398816 (98j:03052)

\bibitem{wilkie1999}
A.~J.~Wilkie.
\newblock A theorem of the complement and some new o-minimal structures.
\newblock {\em Sel. Math., New ser.}, 5 (4): 397--421, 1999.
\newblock Mathematical Reviews (AMS): MR1740677 (2001c:03071)

\end{thebibliography}
\end{document}